\documentclass[11pt,oneside,reqno]{amsart}
\usepackage{amssymb,amsmath,amsthm}
\usepackage{a4wide,comment,enumitem,url}
\usepackage{xcolor}
\usepackage{enumitem}
\usepackage{bm}
\usepackage{float}

\pagestyle{myheadings}
\hoffset=-0.15true in

\usepackage{graphicx}
\usepackage[draft]{hyperref}
\usepackage{amsmath,amsopn,amssymb,amsfonts,stmaryrd}
\usepackage{verbatim}
\usepackage{amsthm}
\usepackage{mathtools}
\usepackage{color}
\usepackage{enumitem}
\usepackage[framemethod=TikZ]{mdframed}
\usepackage{bbm}
\usepackage{mathrsfs}
\usepackage{booktabs}
\usepackage{caption}
\usepackage{cancel}
\usepackage{tensor}
\usepackage{cleveref}


\renewcommand{\H}{\mathbb{H}}

\newcommand{\CA}{\mathcal{A}}

\newcommand{\sgn}{\mbox{sgn}}

\newcommand{\SL}{\mathrm{SL}}

\newcommand{\N}{\mathbb N}
\newcommand{\C}{\mathbb C}
\newcommand{\Q}{\mathbb Q}



\theoremstyle{plain}
\newtheorem{thm}{Theorem}[section]

\newtheorem{lem}[thm]{Lemma}
\newtheorem{prop}[thm]{Proposition}
\newtheorem{conj}[thm]{Conjecture}

\theoremstyle{definition}

\newtheorem*{rems}{Remarks}

\numberwithin{equation}{section}
\numberwithin{thm}{section}

\setlist[enumerate]{leftmargin=*,label=\rm{(\arabic*)}}

\renewcommand{\sgn}{\textnormal{sgn}}



\def\d{\delta}
\def\h{\eta}

\def\l{\lambda}

\def\z{\zeta}

\def\t{\tau}

\def\d{\delta}
\def\h{\eta}

\def\l{\lambda}

\def\z{\zeta}

\def\t{\tau}

\newcommand{\re}{{\rm Re}}

\renewcommand{\sgn}{{\rm sgn}}
\newcommand{\R}{\mathbb R}
\newcommand{\Z}{\mathbb Z}

\setlist[itemize]{noitemsep, topsep=0pt}

\allowdisplaybreaks

\makeatletter
\newcommand{\vast}{\bBigg@{2}}
\newcommand{\Vast}{\bBigg@{5}}
\makeatother

\renewcommand{\pmod}[1]{\ \left( \mathrm{mod} \, #1 \right)}
\newcommand{\Pmod}[1]{\ ( \mathrm{mod} \, #1 )}

\title{On a sign-change conjecture of Schlosser and Zhou}
\author{Kathrin Bringmann}
\address{University of Cologne, Department of Mathematics and Computer Science, Weyertal 86-90, 50931 Cologne, Germany}
\email{kbringma@math.uni-koeln.de}
\author{Bernhard Heim}
\address{University of Cologne, Department of Mathematics and Computer Science, Weyertal 86-90, 50931 Cologne, Germany}
\email{bheim@uni-koeln.de}
\author{Ben Kane}
\address{The University of Hong Kong, Department of Mathematics, Pokfulam, Hong Kong}
\email{bkane@hku.hk}

\begin{document}
\date{\today}
\keywords{Exact formulas, modular forms, sign changes}
\subjclass[2020]{11F11,11F20,11F30,11F37}
\begin{abstract}
	In this paper, we investigate the signs changes of Fourier coefficients of infinite products of $q$-series of Rogers--Ramanujan type. In particular, we prove a conjecture made by Schlosser--Zhou pertaining to such sign changes for products of modulus $10$.
\end{abstract}
\maketitle
\section{Introduction and statement of results}
Motivated by his famous three conjectures for finite products, for a prime $p$, Borwein \cite{Borwein} investigated the infinite products (now called Borwein products)
\[
G_p(q):=\prod_{n\geq 1} \frac{1-q^n}{1-q^{pn}}.
\]
 Andrews \cite[Theorem 2.1]{And} showed that the signs of the coefficients of $G_p$ are periodic with period $p$, which was independently proven in unpublished work of Garvan and Borwein by different methods (see the remark following \cite[Theorem 2.1]{And}). In \cite{Z}, for certain $\delta\in\R$ Schlosser and Zhou investigated the periodicity of the signs of the Fourier coefficients of $G_p(q)^{\delta}$. Schlosser and Zhou primarily focused on the case $p=3$ \cite{Z}, obtaining that the signs change with period $3$ for $n\geq 158$ and $0.227\leq \delta\leq 2.9999$. In \cite[Corollary 5]{Z}, they proved such results by obtaining formulas for the Fourier coefficients of $G_p(q)^{\delta}$ via the Circle Method, which in turn arise from the fact that $G_p$ satisfies modular properties. They further noted that one can obtain exact formulas for these Fourier coefficients by following the work of Rademacher \cite{Rademacher} and Zuckerman \cite{Zuckerman}. Such exact formulas have a long history. Extending work of Hardy and Ramanujan \cite{HardyRamanujan}, Rademacher \cite{Rademacher} obtained an exact formula for the number of partitions of $n$, and subsequent work of Rademacher--Zuckerman \cite{RademacherZuckerman}, Zuckerman \cite{Zuckerman}, and Ono and the first author \cite{BringmannOno} yielded exact formulas for Fourier coefficients of nonpositive weight weakly holomorphic modular forms i.e., meromorphic modular forms whose poles lie at the cusps. Using such exact formulas, the method from \cite{Z} follows a well-known approach for determining the signs of Fourier coefficients. The exact formula naturally splits into a main asymptotic term and an error term. The main asymptotic term overwhelms the error terms for $n$ sufficiently large, and hence the sign necessarily agrees with the sign of the main asymptotic term for sufficiently large $n$.

 Schlosser and Zhou \cite{Z} conjectured a number of further periodic sign patterns for Fourier coefficients of powers of other Borwein products and also powers of some functions built from shifted products of the shape (with $m$, $M\in\N$)
\[
\prod_{\substack{n\geq 1\\ n\equiv m\pmod{M}}} \left(1-q^n\right).
\]
In this paper, we prove one of these conjectures. Define
\begin{equation*}
	Q_{10}(q) := \frac{\left(q,q^9;q^{10}\right)_\infty}{\left(q^3,q^7;q^{10}\right)_\infty},
\end{equation*}
where $(a_1,\dots,a_\ell;q)_{n}:=(a_1;q)_{n}\cdots(a_{\ell};q)_{n}$ with $(a;q)_n:=\prod_{j=0}^{n-1}(1-aq^j)$, $n\in\N_0\cup\{\infty\}$. For $\delta\in\Z$, we write
\[
Q_{10}^{\delta}(q)=:\sum_{n\geq 0} c_{\delta}(n)q^n.
\]
The following states Conjecture 22 of \cite{Z}.
\begin{conj}\label{C:MainConj}
	The coefficients of $Q_{10}^\d(q)$ satisfy for $\d=1$ the sign pattern $+-++--+--+$ and for $\d=-1$ the sign pattern $++++-----+$. That is to say,
	\begin{align*}
		\sgn\left(c_{1}(n)\right)&=
		\begin{cases}
			1&\text{if }n\equiv 0,2,3,6,9\pmod{10},\\
			-1&\text{otherwise},
		\end{cases}\\
		\sgn\left(c_{-1}(n)\right)&=\begin{cases}1&\text{if }n\equiv 0,1,2,3,9\pmod{10},\\
		-1&\text{otherwise}.
	\end{cases}
	\end{align*}
\end{conj}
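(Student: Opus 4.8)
The plan is to realize $Q_{10}^{\delta}$, for $\delta=\pm1$, as a fractional power of $q$ times a weakly holomorphic modular form of weight $0$, and then to run the Rademacher--Zuckerman circle method \cite{Rademacher,Zuckerman} to obtain an exact formula for $c_{\delta}(n)$ whose leading term pins down the sign. By the Jacobi triple product,
\[
\left(q^{a},q^{10-a};q^{10}\right)_{\infty}\left(q^{10};q^{10}\right)_{\infty}=\sum_{n\in\Z}(-1)^{n}q^{5n^{2}+(a-5)n},
\]
so, cancelling the common factor $\left(q^{10};q^{10}\right)_{\infty}=q^{-5/12}\eta(10\tau)$ (with $q=e^{2\pi i\tau}$), the function $Q_{10}$ is exactly the quotient of the two unary theta series $\sum_{n}(-1)^{n}q^{5n^{2}-4n}$ and $\sum_{n}(-1)^{n}q^{5n^{2}-2n}$. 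Consequently $Q_{10}\!\left(e^{2\pi i\tau}\right)^{\delta}$ is, up to a fixed rational power of $q$, a weakly holomorphic modular form $f_{\delta}$ of weight $0$ on some $\Gamma_{0}(N)$ with an explicit multiplier system (equivalently, a generalized eta-quotient), and since $Q_{10}^{\pm1}$ is holomorphic and nonvanishing on $\mathbb{H}$, the poles of $f_{\delta}$ occur only at the cusps. The first concrete task is to pin down $N$, the multiplier system, a complete set of inequivalent cusps together with their widths, and --- most importantly --- the principal part of $f_{\delta}$ at each cusp; all of this follows from the transformation formula of $\eta$ together with the transformation law of the two theta series in the numerator and denominator.

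With this data in hand, and since $\operatorname{wt}(f_{\delta})=0\le0$, the Rademacher--Zuckerman machinery yields, for $n$ large, an absolutely convergent exact formula of the shape
\[
c_{\delta}(n)=\sum_{\mathfrak a}\ \sum_{c\ge1}\frac{2\pi}{c}\,K_{\mathfrak a,c}(n)\left(\frac{|\mu_{\mathfrak a}|}{n-\kappa_{\delta}}\right)^{\!1/2}I_{1}\!\left(\frac{4\pi}{c}\sqrt{|\mu_{\mathfrak a}|\,(n-\kappa_{\delta})}\right),
\]
where $\mathfrak a$ runs over the cusps at which $f_{\delta}$ has a pole, $\mu_{\mathfrak a}$ is the order of that pole, $\kappa_{\delta}\in\Q$ is a fixed shift, and $K_{\mathfrak a,c}(n)$ is a generalized Kloosterman sum determined by the multiplier. (Alternatively, one may bypass the general machinery and apply the circle method directly to the theta quotient, using the explicit modular transformations of the two unary theta series at each rational point.) I would isolate the term(s) for which $\sqrt{|\mu_{\mathfrak a}|}/c$ is maximal, call their sum $M_{\delta}(n)$, verify that it is real, and compute $\sgn(M_{\delta}(n))$ as a function of $n\bmod 10$. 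For $\delta=-1$ I expect a single dominant contribution, giving $M_{-1}(n)$ equal to a positive factor times $\cos\!\left(\frac{\pi(n-1)}{5}\right)$, whose sign is $+$ precisely for $n\equiv 9,0,1,2,3\pmod{10}$ --- exactly the conjectured pattern $++++-----+$. For $\delta=1$ the pattern $+-++--+--+$ is not the sign pattern of any single cosine, and it differs from the $\delta=-1$ pattern only on the residues $1$ and $6$ (a coset of $5\Z/10\Z$), so I anticipate that $M_{1}(n)$ is a sum of two Bessel terms of equal exponential growth (or a single Kloosterman sum modulo $10$ that is not a pure cosine); the task is to evaluate $M_{1}(n)$ in closed form and to check that its sign is $+$ exactly on $\{0,2,3,6,9\}\pmod{10}$ and that it never vanishes.

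It then remains to bound the remainder $E_{\delta}(n):=c_{\delta}(n)-M_{\delta}(n)$, which consists of the tails ($c$ large) of the Rademacher series at the dominant cusps together with the entire contribution of the subdominant cusps; using an explicit upper bound for $I_{1}$, the trivial estimate $|K_{\mathfrak a,c}(n)|\le c$, and the gap in the exponential growth rate $e^{4\pi\sqrt{|\mu_{\mathfrak a}|(n-\kappa_{\delta})}/c}$ between the dominant and subdominant cusps, one obtains an explicit bound $|E_{\delta}(n)|\le B_{\delta}(n)$ with $B_{\delta}(n)/|M_{\delta}(n)|\to0$. Choosing $N_{0}$ so that $B_{\delta}(n)<|M_{\delta}(n)|$ for all $n\ge N_{0}$ establishes the sign pattern for $n\ge N_{0}$, after which the finitely many cases $n<N_{0}$ are settled by expanding $Q_{10}^{\pm1}$ directly --- one finds that several small coefficients vanish, so the literal assertion of Conjecture~\ref{C:MainConj} must be read as a statement for all sufficiently large $n$. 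The main obstacle is to make the error bound simultaneously correct and tight enough to keep $N_{0}$ manageable: one must track the explicit constants through the $\eta$- and theta-transformations at every cusp, bound $|M_{\delta}(n)|$ from below uniformly over the residue classes on which it is smallest --- for $\delta=1$, those where the two equal-growth contributions most nearly cancel --- and verify that on precisely those classes the growth gap to the next cusp is wide enough for the error to lose. This balance, between the minimum of $|M_{\delta}|$ over residues and the size of the error, is the delicate point, and is presumably why $Q_{10}$ can be singled out among the shifted products of \cite{Z}.
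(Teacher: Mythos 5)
Your overall strategy coincides with the paper's: the triple product turns $Q_{10}$ into a quotient of theta functions (the paper writes $Q_{10}(q)=q^{-1}\vartheta(\tau;10\tau)/\vartheta(3\tau;10\tau)$), one works out the transformation law and the principal parts at each cusp, feeds these into Zuckerman's exact formula, isolates the dominant contribution, and beats the remainder with explicit Bessel and Kloosterman estimates plus a finite computer check. Your reading of the two sign patterns is also essentially right: for $\delta=-1$ the main term is a positive factor times $\cos\bigl(2\pi\bigl(\tfrac{3}{25}-\tfrac{n}{10}\bigr)\bigr)$, matching your guess up to the phase; for $\delta=1$ the dominant cusps are those with $h\equiv\pm3\pmod{10}$ and the two conjugate contributions combine into the single cosine $\cos\bigl(2\pi\bigl(\tfrac{4}{25}+\tfrac{3n}{10}\bigr)\bigr)$ --- so the pattern $+-++--+--+$ \emph{is} the sign pattern of one cosine, just with frequency $\tfrac{3n}{10}$ rather than $\tfrac{n}{10}$; this is a harmless imprecision in your anticipation, not a flaw.

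There is, however, one genuine gap: you propose to control the error using ``the trivial estimate $|K_{\mathfrak a,c}(n)|\le c$.'' For a weight-$0$ form the Bessel index in Zuckerman's formula is $I_{1}$, and for large $c$ one has $I_{1}(A/c)\asymp A/(2c)$, so the $c$-th term of the tail is of size $\tfrac{1}{c}\,|K_{\mathfrak a,c}(n)|\cdot\tfrac{A}{c}$; with the trivial bound this is $\asymp A/c$ and the tail diverges like the harmonic series. Your bound $B_{\delta}(n)$ would therefore be infinite, and the whole comparison $B_{\delta}(n)<|M_{\delta}(n)|$ collapses. (This is the familiar fact that absolute convergence of the Rademacher series in weight $0$ --- already for the $j$-function --- requires a power saving in the Kloosterman sums.) The paper devotes its Section \ref{sec:Kloosterman} to exactly this point: the sums $A_{k,j}(n)$ are rewritten as genuine Kloosterman sums $K_{5k}$ or $K_{10k}$ and estimated via Weil's bound $|K_k(n,m)|\le\sqrt{\gcd(n,m,k)}\,d(k)\sqrt{k}$, which yields $|A_k(n)|\ll d(k)\sqrt{k}$ and hence a convergent, numerically explicit tail. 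You would need to incorporate this (or some comparable power-saving bound) for the error analysis to go through; the rest of your outline then matches the paper's argument.
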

In this paper, we prove Conjecture \ref{C:MainConj}.
\begin{thm}
	\Cref{C:MainConj} is true, except for
\begin{align*}
	c_1(n)&=0\text{ for }n\in\{2,5,7,9,15,17,22,27,37,47\},\\
	c_{-1}(n)&=0\text{ for }n\in\{3,4,5,6,9,13,19,23,29,39\}.
\end{align*}
\end{thm}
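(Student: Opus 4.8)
The plan is to run the circle-method strategy sketched in the introduction. First I would recognize $Q_{10}^{\delta}$, for $\delta=\pm1$, as a weakly holomorphic modular form of weight $0$. Writing $q=e^{2\pi i\tau}$, one expresses $Q_{10}$ through generalized Dedekind eta functions $\eta_{10,a}(\tau)$ as (a power of $q$ times) the quotient $\eta_{10,1}/\eta_{10,3}$; since each $\eta_{10,a}$ has weight $\tfrac12$ and is non-vanishing on $\H$, the function $Q_{10}^{\delta}$ is holomorphic on $\H$, of weight $0$, and transforms under a congruence subgroup — conveniently $\Gamma_0(10)$, with a suitable multiplier system — with poles only at the finitely many cusps (for $\Gamma_0(10)$, at $\infty,0,\tfrac12,\tfrac15$). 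Using the explicit $\SL_2(\Z)$ transformation law of the $\eta_{10,a}$, I would compute at each cusp $\mathfrak a$ the order of the pole of $Q_{10}^{\delta}$ there together with the leading coefficient (and phase) of the corresponding principal part; at $\infty$ there is no principal part, since $Q_{10}^{\delta}(q)=1+O(q)$.

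With the principal parts determined, the Rademacher--Zuckerman exact formula for weakly holomorphic forms of weight $\le 0$ expresses, for $n\ge1$, the coefficient $c_{\delta}(n)$ as an absolutely convergent series
\[
c_{\delta}(n)=\sum_{\mathfrak a}\ \sum_{c}\frac{\mathcal K_{\mathfrak a}(n;c)}{c}\,I_1\!\left(\frac{4\pi}{c}\sqrt{N_{\mathfrak a}\,n}\right),
\]
summed over the cusps $\mathfrak a$ carrying a pole and over admissible denominators $c$, where $\mathcal K_{\mathfrak a}(n;c)$ is a Kloosterman-type exponential sum built from the principal part at $\mathfrak a$ and the cusp widths, and $N_{\mathfrak a}>0$ is the width-normalized pole order. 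I would isolate the dominant cusp $\mathfrak a_0$, i.e.\ the one maximizing $\sqrt{N_{\mathfrak a}}/c_{\mathfrak a}$ with $c_{\mathfrak a}$ the least admissible denominator, and call the resulting single term $M_{\delta}(n)$, the remainder being $E_{\delta}(n)$. Using $I_1(x)=\frac{e^{x}}{\sqrt{2\pi x}}\bigl(1+O(x^{-1})\bigr)$ together with an explicit evaluation of $\mathcal K_{\mathfrak a_0}\bigl(n;c_{\mathfrak a_0}\bigr)$, I expect $M_{\delta}(n)=A_{\delta}(n)\cos\!\bigl(\tfrac{2\pi}{10}(\alpha_{\delta}n+\beta_{\delta})\bigr)$ with $A_{\delta}(n)>0$ and explicit integers $\alpha_{\delta},\beta_{\delta}$ satisfying $\gcd(\alpha_{\delta},10)=1$; the two sign patterns in \Cref{C:MainConj} should then appear precisely as $\sgn M_{\delta}(n)$ on each residue class modulo $10$ (the data suggest $\alpha_1\equiv3$ and $\alpha_{-1}\equiv1\pmod{10}$), which is a finite check.

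It then remains to bound $E_{\delta}(n)$. Using the trivial bound $|\mathcal K_{\mathfrak a}(n;c)|\le c$ (up to a constant depending only on the number of cusps), monotonicity properties of $I_1$, and the elementary estimate $I_1(x)\le\tfrac{x}{2}e^{x}$ for the tail in $c$, one obtains an explicit majorant $|E_{\delta}(n)|\le B_{\delta}(n)$ that is exponentially smaller than $A_{\delta}(n)$. Solving the inequality $|M_{\delta}(n)|>|E_{\delta}(n)|$ on each residue class modulo $10$ — the classes where the leading cosine is near its zeros requiring that one additionally track the phase of the next term — yields an explicit threshold $N_0$ such that for $n>N_0$ the sign of $c_{\delta}(n)$ agrees with \Cref{C:MainConj} and, in particular, $c_{\delta}(n)\ne0$. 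Finally, computing $c_{\delta}(n)$ directly from the product expansion for every $n\le N_0$ verifies the sign pattern in that range and pins down the finitely many exceptional indices with $c_{\delta}(n)=0$ recorded in the theorem.

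\textbf{Main obstacle.} The delicate points are the exact principal-part data in the first step and the precise shape of $M_{\delta}(n)$: one needs not only the pole orders at the cusps but the exact leading coefficients and phases of the principal parts — hence careful bookkeeping of the half-integral weights of the constituent $\eta_{10,a}$ and of the multiplier system — since the whole sign statement rests on these constants. Equally, the error analysis must be genuinely quantitative, with fully explicit constants in the Bessel and Kloosterman estimates, so that $N_0$ is small enough for the residual finite computation to be feasible, with particular care on the residue classes where the dominant cosine is small and a secondary term decides the sign.
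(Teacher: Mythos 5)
Your overall strategy --- modularity via theta-type products, principal parts at the cusps, the Rademacher--Zuckerman exact formula, a single dominant cusp producing a cosine main term of period $10$, an explicit bound on the remainder, and a finite computer check below the resulting threshold --- is exactly the paper's. But two steps as you describe them would fail.

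The serious one is the error analysis. For a weight-$0$ form the Bessel index is $1$, so in the range $c\gg\sqrt n$ each term of your series is of size $\frac{|\mathcal K_{\mathfrak a}(n;c)|}{c}\cdot\frac{1}{\sqrt n}\cdot I_1\bigl(\tfrac{4\pi}{c}\sqrt{N_{\mathfrak a}n}\bigr)\asymp\frac{|\mathcal K_{\mathfrak a}(n;c)|}{c^{2}}$ (using $I_1(x)\asymp x$ for small $x$ and restoring the $n^{(\kappa-1)/2}$ prefactor of the exact formula, which you suppressed). With the trivial bound $|\mathcal K_{\mathfrak a}(n;c)|\le c$ this majorant is $\sum_c \frac1c$, which diverges: you cannot bound the tail at all, let alone show it is exponentially smaller than the main term. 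A power saving in $c$ is genuinely needed, and the paper gets it from Weil's bound $|K_c(n,m)|\le\sqrt{\gcd(n,m,c)}\,d(c)\sqrt c$. Before Weil applies, the exponential sums coming out of the theta transformation --- which involve quantities quadratic in $h$ through the multiplier (the $h_1,\nu_1,\mu_1$ of the paper) rather than the clean form $e^{\frac{2\pi i}{c}(nh+mh')}$ --- must be reduced to honest Kloosterman sums at modulus $5k$ or $10k$; this reduction is Section \ref{sec:Kloosterman} of the paper and is the main ingredient missing from your outline.

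Secondly, the group cannot be $\Gamma_0(10)$: the function has a pole at the cusp $\frac3{10}$ but none at $i\infty$, and these cusps are $\Gamma_0(10)$-equivalent, so $|Q_{10}|$ is not $\Gamma_0(10)$-invariant no matter the multiplier. The correct group is (contained in) $\Gamma_1(10)$, which has eight cusps, and the enumeration must distinguish $h/k$ according to $h$ modulo $\gcd(k,10)$; only $h\equiv\pm2\pmod5$ and $h\equiv\pm3\pmod{10}$ carry poles for $\delta=1$. This is fixable but changes the bookkeeping you propose. A smaller imprecision: the phase of the main cosine is not an integer multiple of $\frac{2\pi}{10}$ --- the paper obtains $\cos\bigl(2\pi\bigl(\frac4{25}+\frac{3n}{10}\bigr)\bigr)$ --- and it is exactly this offset with denominator $25$ that keeps the cosine bounded below by $|\cos(\frac{13\pi}{25})|$ on every residue class, so no secondary term is ever needed; the price of this small constant is the large explicit threshold ($n\ge2929$, resp.\ $n\ge2234$) before the finite verification takes over.
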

\begin{rems}
\noindent

\noindent
\begin{enumerate}
\item  In particular, $Q_{10}^{\pm 1}(q)$ has only finitely-many vanishing Fourier coefficients.
Several authors assign to a vanishing coefficient both signs, due to the delicate issue of proving that a coefficient vanishes (we refer to the well-known Lehmer conjecture). With this weaker definition of a sign the conjecture of Schlosser and Zhou is true for all $n\in\N_0$.

\item	While $G_p$ is an eta-quotient, up to a power of $q$ in front, the function $Q_{10}$ is not, so the modular properties of $Q_{10}^{\delta}$ differ from those of $G_p^{\delta}$. As such, a key step in proving \Cref{C:MainConj} is the usage of Jacobi theta functions to establish the modular properties, and the exact formulas that one obtains are of a different flavor.
\item Some of the other conjectures in \cite{Z} involve similar quotients which have the same shape as $Q_{10}$. It is hence likely that the first step towards proving Conjectures 18, 21, and 24 of \cite{Z} involve writing these functions as quotients of Jacobi theta functions.
\end{enumerate}
\end{rems}
The paper is organized as follows: In Section \ref{sec:prelim}, we recall basic facts concerning weakly holomorphic modular forms, the Jacobi theta function, the Dedekind $\eta$-function, Kloosterman sums, and Bessel functions. In Section \ref{sec:exact}, we obtain exact formulas for the counting functions of interest. Section \ref{sec:mainterm} is devoted to detecting the main term contributions.
In Section \ref{sec:Kloosterman} we estimate certain Kloosterman-type sums. Section \ref{sec:error} is devoted to estimate the error term.
\section*{Acknowledgements}
The first author has received funding from the European Research Council (ERC) under the European Union's Horizon 2020 research and innovation programme (grant agreement No. 101001179). The third author was supported by grants from the Research Grants Council of the Hong Kong SAR, China (project numbers HKU 17314122 and HKU 17305923). 
The authors thank Guonio Han for helpful conversations.
\section{Preliminaries}\label{sec:prelim}
\subsection{Weakly holomorphic modular forms}
We briefly introduce modular forms, but refer the reader to \cite{O} for more details.  A function on the complex upper half-plane $F:\H\to\C$ satisfies \begin{it}modularity of weight $\kappa\in\Z$ on a congruence subgroup
$\Gamma\subseteq \SL_2(\Z)$\end{it} \textit{ with multiplier $\chi$} if for every $\gamma=\left(\begin{smallmatrix}a&b\\ c&d\end{smallmatrix}\right)\in\Gamma$ we have
\[
F|_{\kappa}\gamma  = \chi(\gamma) F.
\]
Here the weight $\kappa$ \begin{it}slash operator\end{it} is defined for $\tau\in\H$ by 
\[
F\big|_{\kappa}\gamma(\tau):=(c\tau+d)^{-\kappa} F(\gamma\tau).
\]
We call the equivalence classes of $\Gamma\backslash (\Q\cup\{i\infty\})$ the \begin{it}cusps of $\Gamma$\end{it}. For each cusp $\varrho$, we choose a representative\footnote{Throughout, we assume that $0\leq h<k$ and $\gcd(h,k)=1$.}
$\frac{h}{k}\in\Q$ and $M_{\varrho}\in\SL_2(\Z)$ such that $M_{\varrho}(i\infty)=\frac{h}{k}$; we abuse notation and also refer to $\frac{h}{k}$ as a cusp. If a holomorphic function $F$ satisfies weight $\kappa$ modularity on $\Gamma$ with some multiplier $\chi$, then for every cusp $\varrho$ of the function $F_{\varrho}:=F|_{\kappa} M_{\varrho}$ is invariant under the transformation $\tau\mapsto \tau+\sigma_{\varrho}$ for some $\sigma_{\varrho}\in\N$ and hence has a Fourier expansion
\[
F_{\varrho}(\tau)= \sum_{n\in\Z} c_{F,\varrho}(n) q^{\frac{n}{\sigma_{\varrho}}}\qquad(q:=e^{2\pi i \tau}).
\]
We drop $\varrho$ from the notation if $\varrho=i\infty$. We call $F$ a \begin{it}weight $\kappa$ weakly holomorphic modular form on $\Gamma$ with multiplier  $\chi$\end{it} if $F$ satisfies weight $\kappa$ modularity on $\Gamma$ with multiplier $\chi$, is holomorphic on $\H$, and for each cusp $\varrho$ there exists $n_0$ such that $c_{F,\varrho}(n)=0$ for $n<n_0$. We furthermore call the terms in the Fourier expansion with $n<0$ the \begin{it}principal part\end{it} of $F$ at the cusp $\varrho$.
\subsection{Special modular forms}
We define the {\it Jacobi theta function} (throughout $w\in \C$)
\begin{equation*}
	\vartheta(w;\t) := \sum_{n\in\Z+\frac12} q^{\frac{n^2}2} e^{2\pi in\left(w+\frac12\right)}.
\end{equation*}
We have the Jacobi triple product formula ($\z:=e^{2\pi iw}$ throughout)
\begin{equation}\label{E:triple}
	\vartheta(w;\t) = -iq^\frac18 \z^{-\frac12} (q;q)_\infty (\z;q)_\infty \left(\z^{-1}q;q\right)_\infty.
\end{equation}
The well-known transformation laws of $\vartheta$, going back to Jacobi, may be found for example in 
\cite[Chapter I, Section 11]{Mumford} or
\cite[Proposition 1.3]{Zwegers}. 
To state the transformation laws of $\vartheta$ and its growth towards the cusps for $\re(z)>0$, $k\in\N$, $h\in\Z$ with $\gcd(h,k)=1$, and $h'\in\Z$ with $hh'\equiv -1\Pmod{k}$, we let $\omega_{h,k}$ be defined through\footnote{For the exact shape of $\omega_{h,k}$, see \cite[(5.2.4)]{AndrewsPartitions}.} 
\begin{equation*}
	\eta\left(\frac1k(h+iz)\right) = e^{\frac{\pi i}{12k}(h-h')} \omega_{h,k}^{-1} z^{-\frac12} \eta\left(\frac1k\left(h'+\frac iz\right)\right).
\end{equation*}
\begin{lem}\label{lem:thetaprops}
\noindent

\noindent
	\begin{enumerate}
		\item
		Suppose that $w,z\in\C$ with $\re(z)>0$, $k\in\N$, $h\in\Z$ with $\gcd(h,k)=1$, and $h'\in\Z$ with $hh'\equiv -1\Pmod{k}$. Then we have
		\begin{equation*}
			\vartheta\left(w;\frac1k(h+iz)\right) = e^{\frac{\pi i}{4k}(h-h')} e^{\frac{3\pi i}{4}} \omega_{h,k}^{-3} \sqrt{\frac iz} e^{-\frac{\pi  kw^2}z} \vartheta\left(\frac{iw}z;\frac1k\left(h'+\frac iz\right)\right).
		\end{equation*}
		\item
		Suppose that $w\in\C$, $\tau\in\H$, and $\lambda,\mu\in\Z$. Then
		\begin{equation*}
			\vartheta(w+\l\t+\mu;\t) = (-1)^{\l+\mu} q^{-\frac{\l^2}2} \z^{-\l} \vartheta(w;\t).
		\end{equation*}
		\item For $0\le a<1$ and $b\in\R$, we have
		\begin{equation*}
			\vartheta(a\t+b;\t) = -ie^{-\pi ib} q^{\frac18-\frac a2} \left(1+O\left(q^{\min\{a,1-a\}}\right)\right).
		\end{equation*}
	\end{enumerate}
\end{lem}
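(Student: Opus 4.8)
For part (2), the quasi-periodicity, I would work directly from the defining series. Replacing $w$ by $w+\lambda\t+\mu$ with $\lambda,\mu\in\Z$ introduces the factors $e^{2\pi i n\mu}=(-1)^\mu$ (valid since $n\in\Z+\tfrac12$ and $\mu\in\Z$) and $e^{2\pi i n\lambda\t}=q^{n\lambda}$. Completing the square via $\tfrac{n^2}2+n\lambda=\tfrac12(n+\lambda)^2-\tfrac{\lambda^2}2$ pulls out $q^{-\lambda^2/2}$, and the reindexing $n\mapsto n-\lambda$ (legitimate because $\lambda\in\Z$ preserves $\Z+\tfrac12$) contributes $e^{-2\pi i\lambda(w+\frac12)}=(-1)^\lambda\zeta^{-\lambda}$. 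Assembling these gives the stated factor $(-1)^{\lambda+\mu}q^{-\lambda^2/2}\zeta^{-\lambda}$; this is a routine computation with no real obstacle.

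For part (3), I would read the asymptotics off the triple product \eqref{E:triple}. Setting $w=a\t+b$ gives $\zeta=e^{2\pi i(a\t+b)}=q^a e^{2\pi ib}$, so the prefactor is exactly $-iq^{\frac18}\zeta^{-\frac12}=-ie^{-\pi ib}q^{\frac18-\frac a2}$. It then remains to estimate $(q;q)_\infty(\zeta;q)_\infty(\zeta^{-1}q;q)_\infty$ as $q\to0$: one has $(q;q)_\infty=1+O(q)$, the factor $(\zeta;q)_\infty$ has leading term $1-\zeta=1+O(q^a)$ (using $a\ge0$), and $(\zeta^{-1}q;q)_\infty$ has leading term $1-\zeta^{-1}q=1+O(q^{1-a})$ (using $a<1$). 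Multiplying yields $1+O(q^{\min\{a,1-a\}})$, as claimed. Equivalently, one can isolate the dominant term $n=-\tfrac12$ directly in the defining series, whose exponent $\tfrac18-\tfrac a2$ is minimal for $0\le a<1$, and bound the remaining terms by $O(q^{\min\{a,1-a\}})$.

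Part (1) is the substantive case, and I would prove it by Poisson summation. Writing $\t=\tfrac1k(h+iz)$ and $q^{n^2/2}=e^{\pi i n^2\t}$, the defining series becomes a Gaussian-type sum over $n\in\Z+\tfrac12$ with arithmetic phase $e^{\pi i n^2 h/k}$. I would split $n$ into residue classes modulo $2k$ (the period of that phase); on each class the weight $(-1)^n e^{\pi i n^2 h/k}$ is constant and the remaining sum is Gaussian in the class index. Applying the Gaussian integral $\int e^{-\pi z u^2/k+2\pi i u\beta}\,du=\sqrt{k/z}\,e^{-\pi k\beta^2/z}$ converts the sum over $\Z+\tfrac12$ into a dual sum over $\ell\in\Z$, weighted by a finite quadratic Gauss sum $G(\ell)=\sum_{s\bmod 2k}(-1)^s e^{\pi i h n_s^2/k}e^{\pi i\ell n_s/k}$ with $n_s=s+\tfrac12$, and carrying the factor $\exp\!\left(-\tfrac{\pi k(w-\ell/(2k))^2}{z}\right)$.

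Expanding that factor as $e^{-\pi kw^2/z}e^{\pi w\ell/z}e^{-\pi\ell^2/(4kz)}$ and comparing with the series for $\vartheta\!\left(\tfrac{iw}z;\tfrac1k(h'+\tfrac iz)\right)$ forces the dual index to be $\ell=-2N$ with $N\in\Z+\tfrac12$; in particular only odd $\ell$ survive. The crux is therefore the evaluation of $G(\ell)$: I would show it vanishes for even $\ell$ and, for $\ell=-2N$, reproduces the phase $e^{\pi i h'N^2/k}$ of the dual theta series together with a $w$-independent constant. This is exactly where the hypothesis $hh'\equiv-1\pmod k$ enters, through quadratic Gauss-sum reciprocity (a Landsberg--Schaar-type identity), which converts the $h$-phase into the $h'$-phase. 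The main obstacle is pinning down the resulting root of unity and the branch of $\sqrt{i/z}$ exactly: the magnitude of the identity is never in doubt, but the precise phase $e^{\frac{\pi i}{4k}(h-h')}e^{\frac{3\pi i}4}\omega_{h,k}^{-3}$ requires careful Gauss-sum bookkeeping. To keep this honest I would cross-check it by a soft argument: the Poisson computation already gives $\vartheta(w;\t)=C_{h,k}\sqrt{i/z}\,e^{-\pi kw^2/z}\vartheta(\tfrac{iw}z;\t')$ with $\t'=\tfrac1k(h'+\tfrac iz)$ for a $w$-independent constant $C_{h,k}$. Differentiating in $w$ at $w=0$, where $\vartheta$ vanishes since it is odd, and using the Jacobi derivative formula $\partial_w\vartheta(w;\t)|_{w=0}=-2\pi\eta(\t)^3$ (which follows from \eqref{E:triple}), reduces the determination of $C_{h,k}$ to the given transformation law of $\eta$ cubed, thereby fixing the multiplier; the only remaining subtlety is to track the branch of $\sqrt{i/z}$ consistently with the $z^{-1/2}$ appearing in the $\eta$-law.
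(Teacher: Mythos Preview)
The paper does not actually prove this lemma: it is introduced as ``the well-known transformation laws of $\vartheta$ and its growth towards the cusps'' and stated without proof, with the next subsection beginning immediately afterward. So there is no argument in the paper for you to be compared against.

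On its own merits your sketch is sound. Parts (2) and (3) are the standard one-line arguments from the series and from the triple product \eqref{E:triple}, respectively, and what you wrote is complete. For part (1), Poisson summation after splitting into residues modulo $2k$ is a correct route to the structural shape of the identity, and your ``soft'' cross-check via differentiating at $w=0$ and invoking $\partial_w\vartheta(w;\tau)|_{w=0}=-2\pi\eta(\tau)^3$ is in fact the cleanest way to pin down the constant: it reduces the multiplier directly to the cube of the displayed $\eta$-transformation, which is exactly how $\omega_{h,k}$ is defined in the paper. That cross-check actually renders the detailed Gauss-sum bookkeeping unnecessary; once you know the identity has the form $C_{h,k}\sqrt{i/z}\,e^{-\pi kw^2/z}\vartheta(\tfrac{iw}{z};\tau')$, the derivative argument alone determines $C_{h,k}$. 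The only genuine care point is the one you already flagged: matching the branch of $\sqrt{i/z}$ in the theta law against the $z^{-1/2}$ in the $\eta$-law, since a mismatch there shifts the answer by a fixed eighth root of unity.
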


\subsection{Zuckerman's exact formula}\label{sec:Zuckerman}
To state the exact formula of Zuckerman \cite[Theorem 1]{Zuckerman}, which has been extended to a larger class of functions which include weight zero weakly holomorphic modular forms by Ono and the first author \cite[Theorem 1.1]{BringmannOno}, let $I_\alpha$ denote the usual $I$-Bessel function. 

\begin{thm}\label{Thm:Zuckerman}
Let $\kappa\leq 0$. Suppose that  $F$ is a weakly holomorphic modular form of weight $\kappa$ on a congruence subgroup $\Gamma$ with transformation law
$$
F\left(\frac1k(h+iz)\right) = \chi(\gamma_{h,k})(-iz)^{-\kappa}   F_{\varrho}\left(\frac1k\left(h'+\frac iz\right)\right)
$$
for some multiplier $\chi: \SL_2(\Z)\to\C$ and where $\gamma_{h,k}  := \left( \begin{smallmatrix} h & \beta \\ k & -h' \end{smallmatrix} \right) \in \operatorname{SL}_2(\mathbb{Z})$. Assume that $F$ has the Fourier expansion at $i\infty$, $F(\tau) = \sum_{n\gg-\infty} a(n)q^{n+\alpha}$, and the Fourier expansions at each $0 \leq \frac{h}{k} < 1$, $F|_{\kappa}\gamma_{h,k}(\tau) = \sum_{n \gg -\infty} a_{h,k}(n) q^{\frac{n + \alpha_{h,k}}{c_{k}}}$. Then for $n + \alpha > 0$, we have
	\begin{multline*}
		a(n) =  2\pi (n+\alpha)^{\frac{\kappa-1}{2}} \sum_{k\ge1} \dfrac{1}{k} \sum_{\substack{0 \leq h < k \\ \gcd(h,k) = 1}}\chi(\gamma_{h,k}) e^{- \frac{2\pi i (n+\alpha) h}{k}}
		\\
		\times \sum_{m+\alpha_{h,k} \leq 0} a_{h,k}(m) e^{ \frac{2\pi i}{k c_{k}} (m + \alpha_{h,k}) h' } \left( \dfrac{\lvert m +\alpha_{h,k} \rvert }{c_{k}} \right)^{ \frac{1 - \kappa}{2}} I_{-\kappa+1}\left( \dfrac{4\pi}{k} \sqrt{\dfrac{\lvert m +\alpha_{h,k} \rvert(n + \alpha)}{c_{k}}} \right).
	\end{multline*}
\end{thm}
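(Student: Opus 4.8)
The plan is to prove the formula by the Hardy--Ramanujan--Rademacher Circle Method, following Rademacher \cite{Rademacher}, Zuckerman \cite{Zuckerman}, and the extension in \cite{BringmannOno}. First I would isolate $a(n)$ by Cauchy's theorem: since $F(\tau) = \sum_m a(m)q^{m+\alpha}$,
\[
a(n) = \int_{\mathcal{P}} F(\tau)\, e^{-2\pi i (n+\alpha)\tau}\, d\tau,
\]
where $\mathcal{P}$ is one period of the Rademacher path of order $N$, built from arcs of the Ford circles $C(h,k)$ (tangent to $\R$ at $\frac hk$, radius $\frac1{2k^2}$) for $1\le k\le N$, $0\le h<k$, $\gcd(h,k)=1$; integrating over Ford arcs rather than a horizontal line keeps the error small. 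On the arc attached to $\frac hk$ I substitute $z = -i(k\tau-h)$, i.e.\ $\tau = \frac1k(h+iz)$, which sends $C(h,k)$ to the circle $\lvert z-\frac1{2k}\rvert = \frac1{2k}$ through the origin, on which $\operatorname{Re}(z)>0$, and $d\tau = \frac{dz}{-ik}$.

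On each arc I would apply the transformation law in the hypothesis to replace $F$ near the cusp $\frac hk$ by $F_\varrho = F|_\kappa\gamma_{h,k}$ expanded at $i\infty$, producing $\chi(\gamma_{h,k})(-iz)^{-\kappa}$ and the argument $\tau' = \frac1k(h'+\frac iz)$. Inserting $F_\varrho(\tau') = \sum_m a_{h,k}(m)\, e^{\frac{2\pi i(m+\alpha_{h,k})}{c_k}\tau'}$ and expanding the exponentials via
\[
e^{-2\pi i(n+\alpha)\tau} = e^{-\frac{2\pi i(n+\alpha)h}{k}}\, e^{\frac{2\pi(n+\alpha)}{k}z},\qquad e^{\frac{2\pi i(m+\alpha_{h,k})}{c_k}\tau'} = e^{\frac{2\pi i(m+\alpha_{h,k})h'}{c_k k}}\, e^{-\frac{2\pi(m+\alpha_{h,k})}{c_k k z}},
\]
each Fourier term contributes to $a(n)$ a phase $\chi(\gamma_{h,k})\, e^{-\frac{2\pi i(n+\alpha)h}{k}}\, e^{\frac{2\pi i(m+\alpha_{h,k})h'}{c_k k}}$ times an integral of $(-iz)^{-\kappa}\exp\!\big(\frac{2\pi(n+\alpha)}{k}z + \frac{2\pi|m+\alpha_{h,k}|}{c_k k z}\big)$ over the arc (writing $m+\alpha_{h,k} = -\lvert m+\alpha_{h,k}\rvert$ for the principal part). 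Summing these phases over $0\le h<k$ assembles exactly the Kloosterman-type sum in the statement.

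The heart of the evaluation is this contour integral. I would split each cusp expansion into its principal part ($m+\alpha_{h,k}\le 0$) and holomorphic part ($m+\alpha_{h,k}>0$). For a principal-part term, as $N\to\infty$ the Ford arc fills out the full circle $\lvert z-\frac1{2k}\rvert=\frac1{2k}$, and the resulting loop integral of $(-iz)^{-\kappa}\exp(Az+\frac Bz)\,\frac{dz}{-ik}$ with $A = \frac{2\pi(n+\alpha)}{k}$ and $B = \frac{2\pi|m+\alpha_{h,k}|}{c_k k}$ is a standard Hankel-type Bessel integral. Accounting for the branch of $z^{-\kappa}$ and the orientation of the contour, it evaluates to a constant multiple of $I_{1-\kappa}(2\sqrt{AB})$; since $2\sqrt{AB} = \frac{4\pi}{k}\sqrt{\frac{|m+\alpha_{h,k}|(n+\alpha)}{c_k}}$, this reproduces the Bessel function $I_{-\kappa+1}$ together with the powers $(n+\alpha)^{\frac{\kappa-1}{2}}$ and $\big(\frac{|m+\alpha_{h,k}|}{c_k}\big)^{\frac{1-\kappa}{2}}$ in the statement. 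The hypothesis $\kappa\le 0$ gives $1-\kappa\ge 1$, whence $I_{1-\kappa}(x)\ll x^{1-\kappa}$ near $0$ yields absolute convergence of the series in $k$.

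The main obstacle is the error analysis: I must show that the holomorphic parts of the cusp expansions, integrated over all Ford arcs, together with the discrepancy between the true Ford arcs and the idealized full circles, contribute $o(1)$ as $N\to\infty$. This requires uniform bounds on $F_\varrho$ near $i\infty$ to control the tails $\sum_{m+\alpha_{h,k}>0}$, precise estimates for the arc lengths and for $\operatorname{Re}(\frac1z)$ along the Ford circles coming from the geometry of the Farey dissection, and a justification for interchanging $\lim_{N\to\infty}$ with the summations over $k$, $h$, and $m$. With these estimates in hand, letting $N\to\infty$ collapses the arc contributions onto the stated infinite series; I would follow the error bounds of \cite{Rademacher,Zuckerman,BringmannOno}, for which $\kappa\le0$ is again essential.
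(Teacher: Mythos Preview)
Your sketch is a faithful outline of the classical Rademacher--Zuckerman Circle Method proof, and the main steps (Cauchy integral over the Rademacher path, Ford-circle dissection, modular transformation at each cusp, Hankel-type evaluation to extract the $I$-Bessel function, and error control on the nonprincipal parts using $\kappa\le 0$) are all in the right places with the right inputs.

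However, the paper itself does \emph{not} prove this statement: it is quoted as a known result, attributed to Zuckerman \cite[Theorem~1]{Zuckerman} with the weight-zero extension from \cite[Theorem~1.1]{BringmannOno}, and used as a black box in Section~\ref{sec:exact}. So there is no ``paper's own proof'' to compare against beyond the citations. Your outline is essentially the argument those references carry out; if you were asked to supply a proof here, what you wrote is the correct strategy, though of course the error analysis you flag as ``the main obstacle'' is where all the genuine work lies and would need to be made fully quantitative.
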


\subsection{Kloosterman sums}\hspace{0cm}
We require the following {\it Kloosterman sums}
\begin{equation}\label{eqn:Kloosterman}
	K_k(n,m) := \sum_{\substack{h\Pmod k^*\\hh'\equiv-1\Pmod k}} e^{\frac{2\pi i}{k}\left(nh+mh'\right)},
\end{equation}
where $h\pmod{k}^*$ means that $h$ only runs through coprime element (mod $k$).
Letting $d(n)$ denote the number of divisors of $n$, the following bound for the absolute value of these Kloosterman sums is well-known by work of Weil \cite{Weil} (see \cite[Corollary 11.12]{IwaniecKowalski}).
\begin{lem}\label{lem:KloostermanBound}
	Suppose that for $k\in\N$, $n$, $m\in\Z$. Then we have
	\[
		\left|K_k(n,m)\right| \le \sqrt{\gcd(n,m,k)}  d(k)\sqrt k.
	\]
\end{lem}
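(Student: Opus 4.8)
The plan is to show that $K_k(n,m)$ is, after a harmless reindexing, exactly a classical Kloosterman sum, and then to quote the Weil bound in the precise form recorded in \cite[Corollary 11.12]{IwaniecKowalski}. The starting point is the observation that in \eqref{eqn:Kloosterman} the variable $h'$ is not genuinely independent: the congruence $hh'\equiv-1\Pmod{k}$ determines $h'$ uniquely modulo $k$ once $h$ (coprime to $k$) is fixed, namely $h'\equiv-\overline{h}\Pmod{k}$, where $\overline{h}$ denotes the multiplicative inverse of $h$ modulo $k$. Since the summand $e^{\frac{2\pi i}{k}(nh+mh')}$ depends on $h'$ only through its residue modulo $k$, I would substitute $h'=-\overline{h}$ to obtain
\[
K_k(n,m)=\sum_{h\Pmod{k}^*}e^{\frac{2\pi i}{k}\left(nh-m\overline{h}\right)}.
\]

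First I would identify the right-hand side with the standard Kloosterman sum $S(a,b;k):=\sum_{x\Pmod{k}^*}e^{\frac{2\pi i}{k}(ax+b\overline{x})}$. Comparing the two expressions gives $K_k(n,m)=S(n,-m;k)$. The Weil bound, as stated in \cite[Corollary 11.12]{IwaniecKowalski} and going back to \cite{Weil}, reads $|S(a,b;k)|\le\gcd(a,b,k)^{\frac12}d(k)k^{\frac12}$ for all $a,b\in\Z$ and $k\in\N$. Applying this with $a=n$ and $b=-m$, and using $\gcd(n,-m,k)=\gcd(n,m,k)$, yields exactly
\[
|K_k(n,m)|\le\sqrt{\gcd(n,m,k)}\,d(k)\sqrt{k},
\]
which is the assertion.

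For completeness I would recall why the cited bound holds, although the deep input is entirely contained in the reference. By twisted multiplicativity of Kloosterman sums under the Chinese Remainder Theorem, the estimate for general $k$ reduces to the case of prime power moduli $k=p^{\nu}$. For $\nu\ge2$ the sums $S(a,b;p^{\nu})$ admit an essentially closed-form evaluation via a completing-the-square (stationary-phase) argument, which is consistent with the stated bound. The genuinely nontrivial case is $k=p$ prime, where one needs Weil's estimate $|S(a,b;p)|\le2\sqrt{p}$ for $p\nmid\gcd(a,b)$, a consequence of the Riemann Hypothesis for curves over finite fields.

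The hard part is thus not analytic but purely a matter of bookkeeping: verifying that the sign convention $h'\equiv-\overline{h}$ forced by $hh'\equiv-1\Pmod{k}$ (rather than $hh'\equiv1\Pmod{k}$) produces $S(n,-m;k)$, and that the resulting $\gcd(n,-m,k)$ coincides with $\gcd(n,m,k)$. Since the sign affects only the second argument of $S$ and leaves both the modulus and the gcd invariant, no real obstacle arises, and the lemma follows immediately from the Weil bound.
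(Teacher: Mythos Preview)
Your proposal is correct and matches the paper's treatment: the paper does not supply a proof at all but simply cites the bound as the Weil estimate in the form of \cite[Corollary 11.12]{IwaniecKowalski}, and your argument is precisely the verification that $K_k(n,m)=S(n,-m;k)$ followed by that citation. The extra paragraph sketching the reduction to prime-power moduli is not needed for the paper's purposes, but it is accurate and does no harm.
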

\subsection{Bessel functions}\hspace{0cm}
Recall that the $I$-Bessel function has the well-known integral representation for $\kappa\in\R^+$ and $x\geq 0$ (see \cite[p. 79]{Watson})
\begin{equation}\label{E:int}
	I_{\kappa}(x)=\frac{\left(\frac{x}{2}\right)^{\kappa}}{\sqrt{\pi}\Gamma\left(\kappa+\frac{1}{2}\right)} \int_{-1}^1\left(1-u^2\right)^{\kappa-\frac{1}{2}}e^{xu} du.
\end{equation}

We require the following bounds.
\begin{lem}\label{lem:BesselBounds}\hspace{0cm}
	\begin{enumerate}
		\item
		For $0\leq x<1$, we have
		\[
			I_{1}(x)\leq x.
		\]
		\item If $x\geq 1$, then
		\begin{equation*}
			I_1(x) \le \sqrt{\frac2{\pi x}} e^x.
		\end{equation*}
		\item If $x\geq 3$, then
		\[
			I_1(x) \ge \frac{e^x}{4\sqrt x}.
		\]
	\end{enumerate}
\end{lem}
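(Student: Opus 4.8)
The plan is to derive all three bounds from the integral representation \eqref{E:int} specialized to $\kappa=1$. Since $\Gamma(\tfrac32)=\tfrac{\sqrt\pi}{2}$, this reads $I_1(x)=\frac{x}{\pi}\int_{-1}^1\sqrt{1-u^2}\,e^{xu}\,du$, and the recurring device will be to compare the integrand against the Gamma integrals $\int_0^\infty v^{s}e^{-xv}\,dv=\Gamma(s+1)x^{-s-1}$ after the substitution $v=1-u$, which isolates the factor $e^x$ and produces the powers of $x$ in each bound. For part (1), however, it is cleanest to use the power series $I_1(x)=\sum_{m\ge0}\frac{1}{m!(m+1)!}(x/2)^{2m+1}$, so that $\frac{I_1(x)}{x}=\frac12\sum_{m\ge0}\frac{x^{2m}}{4^m\,m!(m+1)!}$. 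For $0\le x\le1$ I would bound $x^{2m}\le1$ and $(m+1)!\ge1$, giving $\frac{I_1(x)}{x}\le\frac12\sum_{m\ge0}\frac{1}{4^m m!}=\frac12 e^{1/4}<1$, which is the claim with room to spare.

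For part (2) I would use $\sqrt{1-u^2}=\sqrt{(1-u)(1+u)}\le\sqrt{2(1-u)}$ on $[-1,1]$ and then enlarge the range of integration to $(-\infty,1]$, which only increases the value since the integrand is nonnegative. After $v=1-u$ this becomes $\sqrt2\,e^x\int_0^\infty\sqrt v\,e^{-xv}\,dv=\sqrt2\,e^x\cdot\frac{\sqrt\pi}{2}x^{-3/2}$, whence $I_1(x)\le\frac{1}{\sqrt{2\pi x}}\,e^x=\frac12\sqrt{\frac{2}{\pi x}}\,e^x$. This is half of the stated bound, so part (2) in fact holds for every $x>0$; the constant is sharp, matching the classical asymptotic $I_1(x)\sim e^x/\sqrt{2\pi x}$.

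For the lower bound in part (3) the same substitution works, but one cannot afford to discard the factor $\sqrt{1+u}$: replacing it by its value $1$ at $u=0$ yields leading coefficient $\frac{1}{2\sqrt\pi}\approx0.282$, and the unavoidable tail corrections then push the bound below $\frac14$ precisely at the endpoint $x=3$. The fix is to exploit the concavity of $u\mapsto\sqrt{1+u}$ on $[0,1]$: it lies above its secant line, so $\sqrt{1+u}\ge1+(\sqrt2-1)u$, i.e.\ $\sqrt{1+u}\ge\sqrt2-(\sqrt2-1)v$ with $v=1-u$. Restricting the integral to $[0,1]$ and inserting this bound produces a combination of $\int_0^1\sqrt v\,e^{-xv}\,dv$ and $\int_0^1 v^{3/2}e^{-xv}\,dv$. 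Lower-bounding the first by $\frac{\sqrt\pi}{2}x^{-3/2}-\int_1^\infty\sqrt v\,e^{-xv}\,dv$ together with $\int_1^\infty\sqrt v\,e^{-xv}\,dv\le\int_1^\infty v\,e^{-xv}\,dv=\frac{e^{-x}}{x}(1+\frac1x)$, and upper-bounding the second by $\int_0^\infty v^{3/2}e^{-xv}\,dv=\frac{3\sqrt\pi}{4}x^{-5/2}$, gives
\[
I_1(x)\ge\frac{1}{\sqrt{2\pi}}\frac{e^x}{\sqrt x}-\frac{3(\sqrt2-1)}{4\sqrt\pi}\frac{e^x}{x^{3/2}}-\frac{\sqrt2}{\pi}\Big(1+\frac1x\Big).
\]
The leading coefficient is now the true $\frac{1}{\sqrt{2\pi}}\approx0.399$, comfortably above $\frac14$. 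Writing the first two terms as $\frac{e^x}{\sqrt x}\big(\frac{1}{\sqrt{2\pi}}-\frac14-\frac{3(\sqrt2-1)}{4\sqrt\pi x}\big)$, the parenthesis is positive and increasing for $x\ge3$ while $\frac{e^x}{\sqrt x}$ is increasing and the final term is bounded, so it suffices to check the inequality at $x=3$, which holds numerically.

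The main obstacle is part (3): the constant $\tfrac14$ is genuinely close to the truth near $x=3$, so the crude bound $\sqrt{1-u^2}\ge\sqrt{1-u}$ fails, and one must recover enough of the $\sqrt{1+u}$ factor (via concavity) to lift the leading coefficient from $\frac{1}{2\sqrt\pi}$ to $\frac{1}{\sqrt{2\pi}}$ before the error terms are subtracted. Parts (1) and (2) are routine by comparison.
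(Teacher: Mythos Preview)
Your proof is correct, but your route for part (3) differs from the paper's, and your motivating remark about the crude bound is mistaken. The paper drops the integral over $[-1,0]$, substitutes $u\mapsto 1-u$, and then uses precisely the ``crude'' estimate $\sqrt{2-u}\ge 1$ that you say fails, obtaining
\[
I_1(x)\ \ge\ \frac{e^x}{\pi\sqrt{x}}\left(\frac{\sqrt{\pi}}{2}-\Gamma\!\left(\tfrac32,x\right)\right).
\]
The point is that this \emph{does} suffice at $x=3$: one needs $\Gamma(\tfrac32,3)\le \tfrac{\sqrt\pi}{2}-\tfrac{\pi}{4}\approx 0.1008$, and in fact $\Gamma(\tfrac32,3)\approx 0.0818$. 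The paper handles the tail via standard incomplete gamma inequalities (NIST 8.8.2 and 8.10.10) rather than your bound $\int_1^\infty \sqrt{v}\,e^{-xv}\,dv\le \int_1^\infty v\,e^{-xv}\,dv$; it is that replacement $\sqrt{v}\le v$, not the replacement $\sqrt{1+u}\ge 1$, that is too lossy near $x=3$. Your fix---recovering the full leading constant $\tfrac{1}{\sqrt{2\pi}}$ via the secant-line bound $\sqrt{1+u}\ge 1+(\sqrt2-1)u$ from concavity---is a legitimate alternative: it trades the appeal to NIST inequalities for an extra elementary term, and your endpoint-plus-monotonicity check at $x=3$ goes through. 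So both arguments work; the paper's is shorter but leans on tabulated incomplete gamma bounds, while yours is more self-contained.

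For parts (1) and (2) the paper simply says the claims are well known from the integral representation. Your power-series argument for (1) and your $\sqrt{1-u^2}\le \sqrt{2(1-u)}$ argument for (2) (which in fact yields the sharper bound $\tfrac12\sqrt{2/(\pi x)}\,e^x$ for all $x>0$) are perfectly good substitutes.
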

\begin{proof}\hspace{0cm}
The claims follow from equation \eqref{E:int} and are well-known. We give the details for (3) for the convenience of the reader.
Since the integral in \eqref{E:int} from $-1$ to $0$ is nonnegative, we have, making the change of variables $u\mapsto 1-u$,
		\begin{align*}
			I_1(x)\geq \frac{xe^x}{\pi} \int_0^1 \sqrt{2-u} \sqrt u e^{-xu} du\geq\frac{xe^x}{\pi} \int_0^1 \sqrt u e^{-xu} du=\frac{e^x}{\pi\sqrt{x}}\left(\frac{\sqrt\pi}2-\Gamma\left(\frac32,x\right)\right).
		\end{align*}
Here, for $x> 0$, the \begin{it}incomplete gamma function\end{it} is defined by $\Gamma(s,x):=\int_{x}^{\infty} t^{s-1}e^{-t}dt$.
Using  \cite[8.8.2]{NIST} and \cite[8.10.10]{NIST}, we then obtain, for $x\geq 3$
		\[
			I_{1}(x)\geq \frac{e^x}{\pi\sqrt{x}} e^{x}\left(\frac{\sqrt{\pi}}{2}-\frac{\pi\sqrt x}4e^{-x}\left(\sqrt{1+\frac4{\pi x}}-1\right) -\sqrt xe^{-x}\right)\geq \frac{e^x}{4\sqrt{x}}. \qedhere
		\]
\end{proof}
\section{Exact formulas}\label{sec:exact}
In this section we determine the modularity properties of $Q_{10}$ and the principal parts of $Q_{10}$ and $Q_{10}^{-1}$ in every cusp to find exact formulas for their Fourier coefficients.

\subsection{Modularity}
Using \eqref{E:triple}, we have
\begin{equation*}
	Q_{10}(q) = q^{-1} f(\tau),
\end{equation*}
where
\begin{equation*}
	f(\t) := \frac{\vartheta(\t;10\t)}{\vartheta(3\t;10\t)}.
\end{equation*}
For $h\in\Z$, we write
\begin{equation}\label{eqn:3hsplit}
	3h=h_1k+h_2, \quad h_1\in\Z, \quad 0\le h_2<k.
\end{equation}
We set $d:=\gcd(k,10)$ throughout and define  $\nu_{1},\nu_2,\mu_{1},\mu_2 \in \Z$ with $0\le \nu_{2},\mu_2<d$ via
\begin{equation}\label{E:hn}
	h = d\nu_1+\nu_2 , \quad h_2 = d\mu_1+\mu_2.
\end{equation}
Setting $\zeta_{n}:=e^{\frac{2\pi i}{n}}$, a direct calculation using \Cref{lem:thetaprops} (1), (2) gives the following transformation law for $f$.
\begin{prop}\label{P:Trans}
Let $\re(z)>0$, $k\in\N$ and $h,h'\in\Z$ with $\gcd(h,k)=1$, $\frac{10}{d}\mid h'$, and $hh'\equiv -1\pmod{k}$. Then
\begin{multline*}
		f\left(\frac1k(h+iz)\right)=(-1)^{h_1+\nu_1+\mu_1} \zeta_{10k}^{3\mu_2-\nu_2}e^{\frac{2\pi id^2}{20k}\left(\nu_1^2- \mu_1^2\right)h'} e^{\frac\pi{10kz}\left(\mu_2^2-\nu_2^2\right)}e^{ - \frac{4\pi z}{5k}}\\
\times \frac{\vartheta\left(\frac{i\nu_2}{\frac{10}d kz} - \frac{\nu_1d^2h'}{10k} - \frac1{\frac{10}d k};\frac{d^2}{10k}\left(h'+\frac iz\right)\right)}{\vartheta\left(\frac{i\mu_2}{\frac{10}d kz} - \frac{\mu_1d^2h'}{10k} - \frac3{\frac{10}d k};\frac{d^2}{10k}\left(h'+\frac iz\right)\right)}.
\end{multline*}
\end{prop}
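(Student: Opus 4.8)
The plan is a direct computation. Writing $\tau=\tfrac1k(h+iz)$, I substitute into $f=\vartheta(\tau;10\tau)/\vartheta(3\tau;10\tau)$ and apply the modular transformation \Cref{lem:thetaprops}(1) to the numerator and to the denominator, in each case with respect to their common modular parameter $10\tau$. The preparatory step is to present $10\tau$ in the shape required by \Cref{lem:thetaprops}(1): with $d=\gcd(k,10)$ we have $10\tau=\tfrac{1}{k/d}\bigl(\tfrac{10}{d}h+i\tfrac{10}{d}z\bigr)$, and $\gcd\!\bigl(\tfrac{10}{d}h,\tfrac kd\bigr)=1$ since $\gcd(h,k)=1$ and $\gcd\!\bigl(\tfrac{10}{d},\tfrac kd\bigr)=1$. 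The hypothesis $\tfrac{10}{d}\mid h'$ is precisely what makes $h'':=\tfrac d{10}h'$ an integer; it is then an inverse of $\tfrac{10}{d}h$ modulo $\tfrac kd$ in the sense required, so the modular parameter output by \Cref{lem:thetaprops}(1) equals exactly $\tau^*:=\tfrac{d^2}{10k}\bigl(h'+\tfrac iz\bigr)$.

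Applying \Cref{lem:thetaprops}(1) to $\vartheta(\tau;10\tau)$ and to $\vartheta(3\tau;10\tau)$: both use the same $\tfrac kd$, the same $\tfrac{10}{d}h$ and $h''$, and the same modular parameter, so the automorphy factors $e^{\frac{3\pi i}{4}}\omega_{\cdot,\cdot}^{-3}\sqrt{i/Z}$ and $e^{\frac{\pi i}{4(k/d)}(\cdots)}$ (where $Z:=\tfrac{10}{d}z$) are identical and cancel in the quotient. What survives is the ratio of the Gaussian factors $e^{-\pi (k/d)w^2/Z}$ at $w=\tau$ and $w=3\tau$, namely $e^{8\pi(k/d)\tau^2/Z}=e^{4\pi(h+iz)^2/(5kz)}$; expanding $(h+iz)^2=h^2-z^2+2ihz$ splits off the asserted $e^{-4\pi z/(5k)}$, a positive real factor $e^{4\pi h^2/(5kz)}$, and a root of unity $e^{8\pi ih/(5k)}$. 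At this point $f\bigl(\tfrac1k(h+iz)\bigr)$ equals these explicit factors times $\vartheta(\tfrac{i\tau}{Z};\tau^*)/\vartheta(\tfrac{3i\tau}{Z};\tau^*)$.

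The last step is to bring the two theta arguments into the form in the statement using the quasi-periodicity law \Cref{lem:thetaprops}(2); this is where the splittings \eqref{eqn:3hsplit} and \eqref{E:hn} come in. Comparing real and imaginary parts one finds $\tfrac{i\tau}{Z}=w_{\mathrm N}+\nu_1\tau^*$, with $w_{\mathrm N}$ the argument of the numerator $\vartheta$ in the statement; and, using $h_2\equiv 3h\pmod d$ so that $\ell:=\tfrac1d(3h-h_2)=\tfrac{h_1k}{d}\in\Z$, one finds $\tfrac{3i\tau}{Z}=w_{\mathrm D}+\lambda\tau^*+\mu$ with $\lambda:=\mu_1+\ell$ and $\mu:=-h_1h''$ (both integers, integrality of $\mu$ again using $\tfrac{10}{d}\mid h'$), where $w_{\mathrm D}$ is the argument of the denominator $\vartheta$. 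Now \Cref{lem:thetaprops}(2) converts these lattice shifts into elementary prefactors, and it remains to collect all the accumulated factors. The positive real exponents telescope: from $h=d\nu_1+\nu_2$ and $\lambda d=3h-\mu_2$ one checks $8h^2+d^2(\nu_1^2-\lambda^2)+2d(\nu_1\nu_2-\lambda\mu_2)=\mu_2^2-\nu_2^2$, reassembling exactly $e^{\frac{\pi}{10kz}(\mu_2^2-\nu_2^2)}$. The order-$10k$ roots of unity combine, again via $\lambda d=3h-\mu_2$ and $h=d\nu_1+\nu_2$, into $\zeta_{10k}^{3\mu_2-\nu_2}$. The phases linear in $h'$ combine into $e^{\frac{2\pi id^2}{20k}(\nu_1^2-\mu_1^2)h'}$ up to a residual sign $(-1)^{h_1(k/d)h''}$, which merges with the signs $(-1)^{\nu_1}$ and $(-1)^{\lambda+\mu}$ produced by \Cref{lem:thetaprops}(2) to give exactly $(-1)^{h_1+\nu_1+\mu_1}$.

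The computation is elementary throughout, and the real content is the bookkeeping: tracking every root of unity and every sign simultaneously through the two applications of \Cref{lem:thetaprops}(2) and the Gaussian factor. The one step that is not automatic is the final sign: one uses that $\tfrac kd$ even forces $4\mid k$, hence $2\mid d$ and (since $hh'\equiv-1\pmod k$) $h'$ odd, hence $h''$ odd — so $\bigl(\tfrac kd+1\bigr)(h''+1)$ is always even and the residual $(-1)^{h_1(k/d)h''}$ is correctly absorbed.
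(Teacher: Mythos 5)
Your proposal is correct and is precisely the ``direct calculation using \Cref{lem:thetaprops} (1) and \Cref{lem:thetaprops} (2)'' that the paper invokes without writing out: the rewriting $10\tau=\tfrac{1}{k/d}\bigl(\tfrac{10}{d}h+i\tfrac{10}{d}z\bigr)$ with inverse $h''=\tfrac{d}{10}h'$, the cancellation of the automorphy factors in the quotient, the lattice shifts $\nu_1$ and $\mu_1+h_1\tfrac kd$ fed into the quasi-periodicity law, and the key identities $8h^2+d^2(\nu_1^2-\lambda^2)+2d(\nu_1\nu_2-\lambda\mu_2)=\mu_2^2-\nu_2^2$ and $(k/d+1)(h''+1)\equiv 0\pmod 2$ all check out. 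No gaps.
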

\subsection{Principal parts and exact formulas}
\subsubsection{$\d = 1$}
We next determine the principal parts of $f$ at each cusp in order to obtain an exact formula for $c_1(n)$. Before stating the identity, for $k\in\N$, and $j\in\Z$ with $\gcd(j,d)=1$, we define the Kloosterman-type sums
\[
	A_{k,j}(n) := \sum_{\substack{1\leq h<k\\ h\equiv j\pmod{d}\\hh'\equiv-1\Pmod k\\ \frac{10}d\mid h'}} (-1)^{h_1+\nu_1+\mu_1} \zeta_{10k}^{3\mu_2-\nu_2-d}e^{\frac{2\pi i}{k}\left( \frac{d^2}{20}\left(\nu_1^2-\mu_1^2+\nu_1-\mu_1\right)h'-(n+1)h\right)}.
\]
We first note that the above sum only depends on $h,h'\pmod{k}$ and rewrite $A_{k,j}(n)$, uniquely choosing for $j\in\Z$ and $d\mid 10$ an integer $\alpha_j(d)$ satisfying
\begin{equation}\label{eqn:alphajdef}
\alpha_j(d)\equiv 3j\pmod{d}\qquad 1\leq \alpha_j(d)<d.
\end{equation}
We omit $d$ from the notation if it is clear from the context.
\begin{lem}\label{lem:Akjrewrite}
For $k\in\N$ and $d=\gcd(k,10)$, suppose that $1\leq j<d$ with $\gcd(j,d)=1$. Then for $n\in\Z$ we have
\[
	A_{k,j}(n) = \zeta_{10k}^{3\alpha_j-j-d}\sum_{\substack{h\pmod{k}^*\\ h\equiv j\pmod{d}\\hh'\equiv-1\Pmod k\\ \frac{10}d\mid h'}} (-1)^{h_1+\nu_1+\mu_1}e^{\frac{2\pi i}{k}\left( \frac{d^2}{20}\left(\nu_1^2-\mu_1^2+\nu_1-\mu_1\right)h'-(n+1)h\right)}.
\]
\end{lem}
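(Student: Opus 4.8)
The plan is to reduce \Cref{lem:Akjrewrite} to two elementary facts. First, one shows that for every $h$ occurring in the sum defining $A_{k,j}(n)$ the integers $\nu_2$ and $\mu_2$ are constant, namely $\nu_2=j$ and $\mu_2=\alpha_j$; since then $3\mu_2-\nu_2-d=3\alpha_j-j-d$ for all such $h$, the root of unity $\zeta_{10k}^{3\mu_2-\nu_2-d}$ is independent of $h$ and can be pulled out of the sum. Second, one checks that the remaining summand $(-1)^{h_1+\nu_1+\mu_1}e^{\frac{2\pi i}{k}\left(\frac{d^2}{20}(\nu_1^2-\mu_1^2+\nu_1-\mu_1)h'-(n+1)h\right)}$ is a genuine function of $h,h'\pmod k$, so that the summation over the representatives $1\le h<k$ becomes the summation over $h\pmod{k}^*$ in the statement.

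For the first fact I would argue directly from \eqref{eqn:3hsplit} and \eqref{E:hn}. By \eqref{E:hn}, $\nu_2$ is the least nonnegative residue of $h$ modulo $d$, and since $h\equiv j\pmod d$ with $1\le j<d$ this gives $\nu_2=j$. Similarly $\mu_2$ is the least nonnegative residue of $h_2$ modulo $d$; by \eqref{eqn:3hsplit} we have $h_2\equiv 3h\pmod k$, and since $d=\gcd(k,10)\mid k$ reduction modulo $k$ then modulo $d$ is reduction modulo $d$, so $h_2\equiv 3h\equiv 3j\pmod d$. As $d\mid 10$ we have $\gcd(3,d)=1$, hence $\gcd(3j,d)=\gcd(j,d)=1$, so the residue of $3j$ modulo $d$ is nonzero and therefore equals $\alpha_j$ as normalised in \eqref{eqn:alphajdef}; thus $\mu_2=\alpha_j$.

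For the second fact I would first note that every admissible $h$ does admit a compatible $h'$: one needs $hh'\equiv-1\pmod k$ and $\frac{10}{d}\mid h'$, which is solvable by the Chinese Remainder Theorem because $\gcd\!\big(k,\tfrac{10}{d}\big)=1$ (a common prime divisor would divide both $d$ and $\tfrac{10}{d}$, impossible since $10$ is squarefree); this $h'$ is pinned down modulo $\tfrac{10}{d}k$. I would then verify invariance of the summand in two steps. Replacing $h'$ by $h'+\tfrac{10}{d}k$ keeps $h,h_1,\nu_1,\mu_1$ fixed and, writing $h'=\tfrac{10}{d}h''$, changes the exponent by $\pi i\,d(\nu_1^2-\mu_1^2+\nu_1-\mu_1)$, an even multiple of $\pi i$ because $\nu_1(\nu_1+1)$ and $\mu_1(\mu_1+1)$ are even. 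Replacing $h$ by $h+k$ with $h'$ fixed sends $h_1\mapsto h_1+3$ and $\nu_1\mapsto\nu_1+\tfrac{k}{d}$ while fixing $h_2,\mu_1,\nu_2,\mu_2$, and the $-(n+1)h$ contribution picks up $e^{-2\pi i(n+1)}=1$; collecting the signs and using $\tfrac{10}{d}\mid h'$ to simplify the $h'$-dependent exponent leaves an overall factor $(-1)^{(k/d+1)(1+h'd/10)}$, which I would show equals $1$ by splitting on the parity of $k/d$: if $k/d$ is even then $d$ is even, hence $2\mid k$, which forces $h$ and $h'$, and thus $h'd/10$, to be odd.

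I expect the only genuinely delicate point to be this last parity bookkeeping together with the need to fix a consistent normalisation of $h'$; everything else is a routine unwinding of \eqref{eqn:3hsplit}--\eqref{eqn:alphajdef}.
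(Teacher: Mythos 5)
Your proposal is correct and follows essentially the same route as the paper: first pin down $\nu_2=j$ and $\mu_2=\alpha_j$ to pull out the root of unity, then verify that the summand is invariant under $h\mapsto h+k$ and under the admissible shifts of $h'$. The only difference is organizational — the paper does the final parity bookkeeping by splitting into the cases $d=5$ and $d=10$, whereas you package it uniformly via the factor $(-1)^{(k/d+1)(1+h'd/10)}$ and split on the parity of $k/d$; both are valid.
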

\begin{proof}
First note that since $1\leq j\leq d$, $1\leq \nu_2<d$, and
\begin{equation}\label{eqn:nu2j}
j\equiv h\equiv \nu_2\pmod{d}\Rightarrow \nu_2=j
\end{equation}
 by \eqref{E:hn}. Similarly, since $1\leq \alpha_j$, $\mu_2<d$ and
\begin{equation}\label{eqn:mu2alphaj}
\mu_2\equiv 3h\equiv \alpha_j\pmod{d}\Rightarrow \mu_2=\alpha_j.
\end{equation}
 Thus
\[
\zeta_{10k}^{3\mu_2-\nu_2-d}=\zeta_{10k}^{3\alpha_j-j-d}
\]
 is independent of $h$ and we can pull it outside of the sum.

	It hence remains to show that we may let $h$ and $h'$ run $\hspace{-0.1cm}\Pmod k$. We start with $h$. Clearly the factor $e^{-\frac{2\pi i(n+1)h}{k}}$ is invariant under $h\mapsto h+k$, so we check that the remaining quantities do not change as $h\mapsto h+\ell k$ for $\ell\in\Z$. Making the change of variables $h\mapsto h+\ell k$ in \eqref{eqn:3hsplit} gives
	\begin{equation*}
		3(h+\ell k)=h_1k+h_2+3\ell k=(h_1+3\ell)k+h_2.
	\end{equation*}
	Thus $h_2$ stays unchanged and $h_1\mapsto h_1+3\ell$.  Moreover, making the change of variables $h\mapsto h+\ell k$ in \eqref{E:hn} gives
	\begin{equation*}
		h+\ell k=d\nu_1+\nu_2+\ell k=d\left(\nu_1+\ell\frac kd\right)+\nu_2.
	\end{equation*}
	Thus $\nu_2$ stays unchanged and $\nu_1\mapsto\nu_1+\ell\frac kd$.

Note that since $h_2$ remains unchanged, so do  $\mu_1$ and $\mu_2$. Thus we want
\[
		(-1)^{h_1+3\ell+\nu_1+\ell\frac kd+\mu_1} e^{\frac{2\pi id^2}{20k}\big(\left(\nu_1+\ell\frac kd\right)^2-\mu_1^2+\left(\nu_1+\ell\frac kd\right)-\mu_1\big)h'} = (-1)^{h_1+\nu_1+\mu_1} e^{\frac{2\pi id^2}{20k} \left(\nu_1^2-\mu_1^2+\nu_1-\mu_1\right)h'}.
\]
	This holds if
	\begin{equation*}
		(-1)^{\ell+\ell\frac kd} e^{\frac{2\pi id^2}{20k}\left(2\ell\frac kd\nu_1+\ell^2 \frac{k^2}{d^2}+\ell\frac kd\right)h'} = 1.
	\end{equation*}
	Thus we want
	\begin{equation*}
		\frac\ell2 + \ell\frac k{2d} + \frac{d^2}{20k}\left(2\ell\frac kd\nu_1+\ell^2\frac{k^2}{d^2}+\ell\frac kd\right)h' \in\Z.
	\end{equation*}
	We now distinguish the cases $d=5$ and $d=10$.

	If $d=5$, then $2\mid h'$ and $\frac kd$ is odd. Thus we want
	\begin{equation*}
		\frac12 \ell \left(1+\frac k5\right) + \frac12\left(2\ell\nu_1+\ell^2\frac k5+\ell\right)\frac{h'}2 \equiv 0 \Pmod1 \Leftrightarrow \frac12 \ell(\ell+1) \frac{h'}2 \equiv 0 \Pmod1,
	\end{equation*}
	which holds. 

	If $d=10$, then we want
	\begin{equation*}
		\frac\ell2 + \frac{\ell k}{20} + \frac\ell2\left(2\nu_1 + \ell\frac k{10} + 1\right)h' \in \Z.
	\end{equation*}
	Since $2\mid k$, we have $h'$ odd, so the left-hand side is
	\[
		\frac{\ell}{2} + \frac{\ell\frac{k}{10}}{2} + \frac{\ell}{2}\left(\ell\frac{k}{10}+1 \right)\pmod{1}.
	\]
	If $\ell$ is even, then every term is an integer, so the claim follows. If $\ell$ is odd, then we have
	\[
		\frac{\ell}{2} + \frac{\ell\frac{k}{10}}{2} + \frac{\ell}{2}\left(\ell\frac{k}{10}+1 \right)\equiv \frac12+\frac{\frac{k}{10}}{2} + \frac{\frac{k}{10}}{2}+\frac12 \equiv 0\pmod{1}.
	\]
	We hence conclude that the sum only depends on $h$ (mod $k$).

	To see the invariance under $h'\Pmod k$, we again split into cases depending on $d$. For $d=5$, we change $h'$ into $h'+2\ell k$ to preserve the condition $\frac{10}{d}\mid h'$. We want
	\begin{equation*}
		e^{5\pi i\left(\nu_1^2-\mu_1^2+\nu_1-\mu_1\right)\ell} = 1.
	\end{equation*}
	This holds because $2\mid (\nu_1^2-\mu_1^2+\nu_1-\mu_1)$.

	For $d=10$, we change $h'$ into $h'+\ell k$ and we want
	\begin{equation*}
		e^{10\pi i\left(\nu_1^2-\mu_1^2+\nu_1-\mu_1\right)\ell} = 1.
	\end{equation*}
	This also holds.
\end{proof}
Abbreviating
\begin{equation}\label{eqn:Akdef}
A_k(n):=\sum_{\pm} A_{k,\pm 3}(n),
\end{equation}
we next give the exact formula for $c_1(n)$.
\begin{lem}\label{lem:exact1}
For $n\in\N$, we have
	\begin{equation*}
		c_1(n) = \frac{\sqrt2\pi}{\sqrt{5n+8}} \sum_{\substack{k\ge1\\ d=\gcd(k,10)\in\{5,10\}}} \frac{\sqrt{d-4}}k  A_k(n)I_1\left(\frac{2\pi}{5k}\sqrt{2(d-4)(5n+8)}\right).
	\end{equation*}
\end{lem}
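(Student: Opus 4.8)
\noindent\emph{Sketch of the intended proof.} The plan is to express $c_1(n)$ via Zuckerman's exact formula, Theorem~\ref{Thm:Zuckerman}, applied with $\kappa=0$ to the weight-zero weakly holomorphic modular form attached to $Q_{10}$. By the text, $Q_{10}(q)=q^{-1}f(\tau)$ with $f(\tau)=\vartheta(\tau;10\tau)/\vartheta(3\tau;10\tau)$, so up to a shift of the index the $c_1(n)$ are the Fourier coefficients of $f$ at $i\infty$. Since $\vartheta(w;\tau)$ vanishes exactly on $\Z\tau+\Z$, the denominator $\vartheta(3\tau;10\tau)$ is non-vanishing on $\H$, so $f$ is holomorphic on $\H$; it is also invariant under $T$, and Proposition~\ref{P:Trans} furnishes its transformation behaviour at an arbitrary cusp $\frac hk$. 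Thus a suitable normalisation of $f$ satisfies the hypotheses of Theorem~\ref{Thm:Zuckerman}, the weak holomorphy being a consequence of the finiteness of the principal parts established below. Here I would deal carefully with the two harmless but bookkeeping-heavy normalisations — the factor $q^{-1}$ relating $Q_{10}$ and $f$, and the factor $e^{-4\pi z/(5k)}$ in Proposition~\ref{P:Trans} (which tends to $1$ as $z\to0$) — since tracking these is exactly what fixes the shift at $i\infty$ and hence the precise shape $5n+8$ appearing in the statement.

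The heart of the argument is the computation of the principal parts. Fix a cusp $\frac hk$ with $\gcd(h,k)=1$, $0\le h<k$, and $h'$ with $hh'\equiv-1\pmod k$, $\frac{10}{d}\mid h'$, where $d:=\gcd(k,10)$. Applying Proposition~\ref{P:Trans}, the behaviour of $f$ as $z\to0$ is controlled by the prefactor $e^{\frac{\pi(\mu_2^2-\nu_2^2)}{10kz}}$ times the theta quotient evaluated at $\tau'=\frac{d^2}{10k}(h'+\frac iz)$. Using Lemma~\ref{lem:thetaprops}(2) I bring both theta arguments into the form $a\tau'+b$ with $0\le a<1$ — the value of $a$ being $\nu_2/d$ in the numerator and $\mu_2/d$ in the denominator — and then Lemma~\ref{lem:thetaprops}(3) gives the leading term of the theta quotient as $\tau'\to i\infty$. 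Multiplying this leading term by the prefactor, the order of the pole of $f$ at $\frac hk$ turns out to be proportional to $(\mu_2-\nu_2)(\mu_2+\nu_2-d)$. A short case analysis — using $0\le\nu_2,\mu_2<d$ and $\nu_2\equiv h$, $\mu_2\equiv 3h\pmod d$ (see \eqref{eqn:nu2j}, \eqref{eqn:mu2alphaj}) — shows that this quantity is positive exactly when $d\in\{5,10\}$ and $h\equiv\pm3\pmod d$, that it equals $2(d-4)$ in those cases, and that it is $\le0$ otherwise; hence $f$ has a pole only at these cusps, of strength $(d-4)/10$. I would also check in those cases that only one Fourier coefficient of the local expansion lies in the principal part (the next term of the theta quotient already pushes the exponent past $0$), so the principal part is a single term, and that its coefficient is precisely the root-of-unity factor $(-1)^{h_1+\nu_1+\mu_1}\zeta_{10k}^{3\mu_2-\nu_2}e^{\frac{2\pi id^2}{20k}(\nu_1^2-\mu_1^2)h'}$ of Proposition~\ref{P:Trans}.

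Feeding the transformation law and this principal-part data into Theorem~\ref{Thm:Zuckerman} with $\kappa=0$ then writes $c_1(n)$ as $2\pi$ times $\sum_{k\ge1}\frac1k$ times an inner sum over the residues $h\bmod k$ coprime to $k$; only the $k$ with $d\in\{5,10\}$ contribute, and the inner sum is supported on $h\equiv\pm3\pmod d$. Grouping this sum by the residue $j\equiv h\pmod d$ and combining the root-of-unity factor above with the phases $e^{2\pi i(m+\alpha_{h,k})h'/(kc_k)}$ and $e^{-2\pi i(n+\alpha)h/k}$ coming from Theorem~\ref{Thm:Zuckerman}, I would recognise the inner sum — after the simplification recorded in Lemma~\ref{lem:Akjrewrite} — as $A_{k,3}(n)+A_{k,-3}(n)=A_k(n)$ (see \eqref{eqn:Akdef}). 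Inserting the pole strength $(d-4)/10$ turns the argument of the Bessel function in Theorem~\ref{Thm:Zuckerman} into $\frac{2\pi}{5k}\sqrt{2(d-4)(5n+8)}$ and the leading constant into $\frac{\sqrt2\pi}{\sqrt{5n+8}}\cdot\frac{\sqrt{d-4}}{k}$, which is the asserted identity; its absolute convergence is part of Theorem~\ref{Thm:Zuckerman}.

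I expect the principal-part step to be the main obstacle. The difficulties are threefold: carrying the many phase factors of Proposition~\ref{P:Trans} correctly through the reductions of Lemma~\ref{lem:thetaprops}(2); extracting the \emph{true} order of the pole, which only becomes visible after multiplying the naive leading term of the theta quotient by the exponential prefactor $e^{\frac{\pi(\mu_2^2-\nu_2^2)}{10kz}}$ — it is this combination that singles out $d\in\{5,10\}$, $h\equiv\pm3\pmod d$ and produces the factors $\sqrt{d-4}$ and $5n+8$; and verifying that at each contributing cusp the principal part consists of a single term, so that one arrives at one Bessel function per $k$ rather than a finite linear combination.
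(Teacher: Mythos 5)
Your proposal is correct and follows essentially the same route as the paper: use \Cref{P:Trans} together with \Cref{lem:thetaprops} (3) to locate the cusps with growth (the exponent $(\mu_2-\nu_2)(\mu_2+\nu_2-d)=2(d-4)$ exactly for $h\equiv\pm3\pmod d$ with $d\in\{5,10\}$), observe that the principal part is a single term, and feed this into \Cref{Thm:Zuckerman} with the shift coming from $q^{-1}$ and $e^{-4\pi z/(5k)}$ to produce $5n+8$ and the stated Bessel argument. The one small slip in your sketch is the claim that the principal-part coefficient is \emph{precisely} the prefactor of \Cref{P:Trans}: applying \Cref{lem:thetaprops} (3) to numerator and denominator contributes additional phases $-ie^{-\pi i b}$, which is where the extra $\zeta_{10k}^{-d}$ and the $\nu_1-\mu_1$ term visible in \eqref{E:PP} and in the definition of $A_{k,j}(n)$ come from; you flag the phase bookkeeping as the main difficulty, and indeed that is exactly the part that must be done carefully.
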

\begin{proof}
	Using \Cref{lem:thetaprops} (3) with $a = \frac{\nu_2}{d}$ and $a = \frac{\mu_2}{d}$, we see by \Cref{P:Trans} that there can only be growth as $z\to 0$ if there is growth in the factor
	\begin{equation*}
		e^{\frac\pi{10kz}\left(\mu_2^2-\nu_2^2\right)+\frac\pi{\frac{10}d kz}(\nu_2-\mu_2)}= e^{\frac\pi{10kz}\left(\mu_2^2-\nu_2^2+d\left(\nu_2-\mu_2\right)\right)}.
	\end{equation*}
Thus, in order to have growth, we require
	\begin{equation*}
		 \mu_2^2-\nu_2^2 + d(\nu_2 - \mu_2) > 0.
	\end{equation*}
	A direct calculation yields that this is satisfied if and only if ($d = 5$ and $\nu_2 \in \{2,3\}$) or ($d = 10$ and $\nu_{2} \in \{3,7\}$). Equivalently we may describe this as $d = 5$ and $h \equiv \pm 2 \pmod{5}$ or $d = 10$ and $h \equiv \pm 3 \pmod{10}$.

	Noting that the error term in \Cref{lem:thetaprops} (3) does not contribute to the principal part, we obtain as principal part
	\begin{equation}\label{E:PP}
		(-1)^{h_1+\nu_1+\mu_1} \zeta_{10k}^{3\mu_2-\nu_2-d} e^{\frac{2\pi id^2}{20k} \left(\nu_1^2-\mu_1^2+\nu_1-\mu_1\right)h'} e^{\frac{\pi}{10kz}\left(\mu_2^2-\nu_2^2+d\left(\nu_2-\mu_2\right)\right)}e^{-\frac{4\pi z}{5k}}.
	\end{equation}
	Plugging \eqref{E:PP} into\footnote{Note the shift $n\mapsto n+1$ because we have a $q^{-1}$ outside.} \Cref{Thm:Zuckerman}, we have
	\begin{multline*}
		c_1(n)=\frac{\sqrt2\pi}{\sqrt{5n+8}} \sum_{\substack{k\ge1\\ d=\gcd(k,10)\in\{5,10\}}} \frac{\sqrt{d-4}}k\sum_{\substack{1\le h\le k\\\gcd(h,k)=1\\h\equiv \pm 3\pmod{d}\\hh'\equiv-1\Pmod k\\\frac{10}d\mid h'}} (-1)^{h_1+\nu_1+\mu_1}\zeta_{10k}^{3\mu_2-\nu_2-d}\\
		\times e^{\frac{2\pi i}{k}\left(\frac{d^2}{20}\left(\nu_1^2 - \mu_1^2 + \nu_1 - \mu_1\right)h'-(n+1)h\right)}I_1\left(\frac{2\pi}{5k}\sqrt{2(d-4)(5n+8)}\right).
	\end{multline*}
	Plugging in \eqref{eqn:Akdef} and the definition of $A_{k,\pm3}(n)$ gives the claim.
\end{proof}
\subsubsection{$\d = -1$}

To state the exact formula in this case, we abbreviate
\begin{equation}\label{E:MA}
	\CA_k(n):=\sum_{\pm}\overline{A_{k,\pm 1}(-n)}.
\end{equation}
\begin{lem}\label{lem:exact-1}
For $n\in\N$, we have
	\[
		c_{-1}(n)=\frac{\sqrt2\pi}{\sqrt{5n-8}} \sum_{\substack{k\ge1\\ d=\gcd(k,10)\in\{5,10\}}} \frac{\sqrt{d-4}}{k} \CA_k(n) I_1\left(\frac{2\pi}{5k}\sqrt{2(d-4)(5n-8)}\right).
	\]
\end{lem}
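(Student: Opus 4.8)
The plan is to repeat the argument of \Cref{lem:exact1} essentially verbatim, with $f$ replaced by its reciprocal. By \eqref{E:triple} we have $Q_{10}^{-1}(q)=q\,f(\tau)^{-1}$ with $f(\tau)^{-1}=\vartheta(3\tau;10\tau)/\vartheta(\tau;10\tau)$. The triple product \eqref{E:triple} also shows that $\vartheta(w;\tau)$ vanishes precisely when $w$ lies in the lattice $\mathbb Z\tau+\mathbb Z$; since $\tau=10m\tau+\ell$ (resp.\ $3\tau=10m\tau+\ell$) has no solution with $m,\ell\in\mathbb Z$ and $\tau\in\mathbb{H}$, neither $\vartheta(\tau;10\tau)$ nor $\vartheta(3\tau;10\tau)$ vanishes on $\mathbb{H}$. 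Hence $f^{-1}$ is holomorphic and non-vanishing on $\mathbb{H}$, so it is a weakly holomorphic modular form of weight $0$ with poles only at the cusps, and \Cref{Thm:Zuckerman} applies. Taking reciprocals in \Cref{P:Trans} gives at once the transformation law of $f^{-1}$ under $\tau\mapsto\frac1k(h+iz)$: the sign $(-1)^{h_1+\nu_1+\mu_1}$ is unchanged, every root of unity and every $h'$-dependent exponential factor is complex conjugated, the factor $e^{\frac{\pi}{10kz}(\mu_2^2-\nu_2^2)}$ becomes $e^{\frac{\pi}{10kz}(\nu_2^2-\mu_2^2)}$, the factor $e^{-\frac{4\pi z}{5k}}$ becomes $e^{\frac{4\pi z}{5k}}$, and the theta quotient is inverted, so that the roles of $(\nu_1,\nu_2)$ and $(\mu_1,\mu_2)$ are interchanged throughout relative to the $\delta=1$ case.

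The next step is to read off the principal parts. Applying \Cref{lem:thetaprops}(3) to the numerator and denominator theta functions of $f^{-1}$, now with $a=\mu_2/d$ and $a=\nu_2/d$ respectively, and combining their leading terms with the explicit $z$-dependent prefactor, the only source of growth as $z\to0^+$ is
\[
\exp\!\left(\frac{\pi}{10kz}\bigl(\nu_2^2-\mu_2^2+d(\mu_2-\nu_2)\bigr)\right)=\exp\!\left(\frac{\pi(\nu_2-\mu_2)(\nu_2+\mu_2-d)}{10kz}\right),
\]
which is the $\nu\leftrightarrow\mu$ image of the corresponding growth factor in the proof of \Cref{lem:exact1}. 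Using $\nu_2\equiv h\pmod d$ and $\mu_2\equiv 3h\pmod d$ (from \eqref{eqn:3hsplit} and \eqref{E:hn}), a short calculation shows that $(\nu_2-\mu_2)(\nu_2+\mu_2-d)>0$ exactly when $d=5$ and $h\equiv\pm1\pmod5$, or $d=10$ and $h\equiv\pm1\pmod{10}$, i.e.\ when $h\equiv\pm1\pmod d$; moreover at those cusps $(\nu_2-\mu_2)(\nu_2+\mu_2-d)=2(d-4)$, exactly the quantity that appears in the $\delta=1$ exact formula. The principal part of $f^{-1}$ at such a cusp is then the analogue of \eqref{E:PP}, obtained from it by interchanging $\nu$ and $\mu$ and conjugating all the phases.

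Finally, I would substitute these principal parts into \Cref{Thm:Zuckerman}. The computation is line-for-line the same as in the proof of \Cref{lem:exact1}, with two changes. First, $Q_{10}^{-1}$ carries a factor $q^{+1}$ rather than $q^{-1}$, so the index is shifted by $-1$ in place of $+1$ and the corresponding expansion exponent changes sign; carrying out the same computation of orders of growth at the cusps thus produces $\sqrt{5n-8}$ and the $I$-Bessel argument $\frac{2\pi}{5k}\sqrt{2(d-4)(5n-8)}$ in place of their $\delta=1$ counterparts. Second, the inner sum over $h$ now runs over $h\equiv\pm1\pmod d$ with $\gcd(h,k)=1$, $hh'\equiv-1\pmod k$ and $\frac{10}{d}\mid h'$; collecting the conjugated root-of-unity and exponential factors one recognises this inner sum as $\overline{A_{k,\pm1}(-n)}$, and summing over the two signs gives $\CA_k(n)=\sum_{\pm}\overline{A_{k,\pm1}(-n)}$ as in \eqref{E:MA}, which yields the claimed formula. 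I expect the only genuinely delicate point to be this last identification: one must prove the analogue of \Cref{lem:Akjrewrite} for $j=\pm1$, checking that the conjugate root of unity $\zeta_{10k}^{\,j-3\alpha_j(d)+d}$ is independent of $h$, that the conjugate of the $h'$-exponent coming from the prefactor combines with the phases supplied by \Cref{lem:thetaprops}(3) to reproduce (with the opposite sign) the exponent $\frac{d^2}{20}(\nu_1^2-\mu_1^2+\nu_1-\mu_1)h'$ occurring in $A_{k,\pm1}(-n)$, and that all of these quantities are invariant under $h\mapsto h+\ell k$ and $h'\mapsto h'+\ell k$ — this last verification being exactly the two-case ($d=5$ and $d=10$) computation already carried out in the proof of \Cref{lem:Akjrewrite}.
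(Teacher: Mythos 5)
Your proposal is correct and follows the paper's proof essentially verbatim: the paper likewise inverts the transformation law of \Cref{P:Trans}, finds that exactly the cusps with $h\equiv\pm1\pmod{d}$ carry a principal part (with growth exponent $2(d-4)$, hence the shift $5n+8\mapsto 5n-8$ coming from the sign flip in $e^{\pm\frac{4\pi z}{5k}}$), and substitutes into \Cref{Thm:Zuckerman} to recognize the inner sum as $\CA_k(n)=\sum_{\pm}\overline{A_{k,\pm1}(-n)}$. The one ``delicate point'' you flag at the end needs no new work: \Cref{lem:Akjrewrite} is already stated and proved for every $j$ with $\gcd(j,d)=1$ and every $n\in\Z$, so it applies directly to $A_{k,\pm1}(-n)$ and hence, by termwise conjugation of a sum over an unchanged index set, to $\overline{A_{k,\pm1}(-n)}$.
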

\begin{proof}
We first determine which cusps $\frac{h}{k}$ contribute a non-trivial principal part. We get a contribution if ($d=5$ and $\nu_2\in\{1,4\}$) or if ($d=10$ and $\nu_2\in\{1,9\}$), and a straightforward calculation yields that the principal parts at the corresponding cusps are
\[
	(-1)^{h_1+\nu_1+\mu_1} \zeta_{10k}^{d+\nu_2-3\mu_2}e^{\frac{2\pi id^2}{20k}\left(\mu_1^2-\nu_1^2+\mu_1-\nu_1\right)h'} e^{\frac{\pi}{10kz}\left(\nu_2^2-\mu_2^2+d\left(\mu_2-\nu_2\right)\right)} e^{\frac{4\pi z}{5k}}.
\]
\Cref{Thm:Zuckerman} then implies that
\begin{align*}
		c_{-1}(n)
		&=\frac{\sqrt2\pi}{\sqrt{5n-8}} 
		\sum_{\substack{k\ge1\\ d=\gcd(k,10)\in\{5,10\}}} 
		\frac{\sqrt{d-4}}k\\
		&\hspace{-1cm}\times \hspace{-0.6cm}\sum_{\substack{1\le h\le k\\\gcd(h,k)=1\\h\equiv\pm1\Pmod5\\hh'\equiv-1\Pmod k\\\frac{10}d\mid h'}} \hspace{-0.8cm} (-1)^{h_1+\nu_1+\mu_1} \zeta_{10k}^{\nu_2+d-3\mu_2} e^{\frac{2\pi i}{k}\left(\frac{d^2}{20}\left(\mu_1^2 - \nu_1^2 + \mu_1 - \nu_1\right)h'-(n-1)h\right)}
 I_1\!\left(\frac{2\pi}{5k}\sqrt{2(d-4)(5n-8)}\right).
	\end{align*}
Plugging in \eqref{eqn:Akdef} and the definition of $A_{k,\pm1}(n)$ gives the claim.
\end{proof}

\section{Main term contributions}\label{sec:mainterm}
In this section we split $c_1(n)$ and $c_{-1}(n)$ into a main plus an error term.
\subsection{$\d=1$}\label{sec:maindelta=1}
We define
\begin{align*}
	M_1(n)&:=\frac{2\sqrt3\pi}{5\sqrt{5n+8}}  \cos\left(2\pi\left(\frac{4}{25}+\frac{3n}{10}\right)\right)I_1\left(\frac{2\pi}{25}\sqrt{3(5n+8)}\right),\\
	E_1(n)&:=\frac{\sqrt2\pi}{\sqrt{5n+8}} \sum_{\substack{k\ge1\\ d=\gcd(k,10)\in\{5,10\}\\ k\neq 10}} \frac{\sqrt{d-4}}k  A_k(n)I_1\left(\frac{2\pi}{5k}\sqrt{2(d-4)(5n+8)}\right).
\end{align*}
\begin{lem}\label{lem:main+error1}
For $n\in\N$, we have
	\[
		c_{1}(n)=M_1(n)+E_1(n).
	\]
	Moreover, if $|E_1(n)|<|M_1(n)|$, then $\sgn(c_1(n))$ agrees with what is claimed in \Cref{C:MainConj} for these $n$.
\end{lem}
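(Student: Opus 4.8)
The plan is to read the decomposition $c_1(n)=M_1(n)+E_1(n)$ off the exact formula of \Cref{lem:exact1} by separating the single index $k=10$, and then to determine $\sgn(M_1(n))$ from its cosine factor. For $k=10$ one has $d=\gcd(10,10)=10$, so $d-4=6$; the constant in front of that summand simplifies to $\frac{\sqrt2\,\pi}{\sqrt{5n+8}}\cdot\frac{\sqrt6}{10}=\frac{\sqrt3\,\pi}{5\sqrt{5n+8}}$ and its Bessel argument to $\frac{2\pi}{50}\sqrt{12(5n+8)}=\frac{2\pi}{25}\sqrt{3(5n+8)}$, matching those appearing in $M_1(n)$. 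Since $E_1(n)$ is by definition the sum of all the remaining terms (those with $d\in\{5,10\}$ and $k\neq10$), the asserted identity reduces to the single claim $A_{10}(n)=2\cos(2\pi(\frac4{25}+\frac{3n}{10}))$.

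To establish this I would evaluate $A_{10}(n)$ directly from the form in \Cref{lem:Akjrewrite} with $k=d=10$. The conditions $\gcd(h,10)=1$, $h\equiv\pm3\pmod{10}$, $1\le h<10$ leave exactly $h=3$ (with $h'=3$) and $h=7$ (with $h'=7$), the requirement $\tfrac{10}{d}\mid h'$ being automatic. For each of these $h$ one reads off $h_1,h_2$ from \eqref{eqn:3hsplit}, the data $\nu_1,\nu_2,\mu_1,\mu_2$ from \eqref{E:hn}, and $\alpha_j$ from \eqref{eqn:alphajdef}; carrying out the resulting finite computation yields $A_{10,3}(n)=e^{-2\pi i(\frac4{25}+\frac{3n}{10})}$, and then the relations $A_{10,3}(n)A_{10,-3}(n)=e^{-2\pi i(n+1)}=1$ together with $|A_{10,\pm3}(n)|=1$ force $A_{10,-3}(n)=\overline{A_{10,3}(n)}$. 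By \eqref{eqn:Akdef}, $A_{10}(n)=A_{10,3}(n)+A_{10,-3}(n)=2\cos(2\pi(\frac4{25}+\frac{3n}{10}))$, which gives $c_1(n)=M_1(n)+E_1(n)$. This phase bookkeeping --- tracking the root of unity $\zeta_{100}^{3\alpha_j-j-d}$ and the exponential so that the two summands come out as genuine complex conjugates --- is the one place where care is needed, and is the step I expect to be the main obstacle; everything else is routine.

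For the sign statement, observe that for $n\in\N$ the factor $\frac{2\sqrt3\,\pi}{5\sqrt{5n+8}}$ is positive and $I_1(x)>0$ for $x>0$ (immediate from \eqref{E:int}), so $\sgn(M_1(n))=\sgn\!\big(\cos(2\pi(\tfrac4{25}+\tfrac{3n}{10}))\big)$. This cosine is never $0$: vanishing would require $4(\tfrac4{25}+\tfrac{3n}{10})=\tfrac{16+30n}{25}$ to be an odd integer, hence $5n\equiv9\pmod{25}$, which has no solution. Therefore $\sgn(M_1(n))$ depends only on $n$ modulo $10$, and checking the ten residue classes shows it equals $+1$ exactly when $n\equiv0,2,3,6,9\pmod{10}$ and $-1$ otherwise --- precisely the pattern asserted for $\sgn(c_1(n))$ in \Cref{C:MainConj}. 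Finally, if $|E_1(n)|<|M_1(n)|$, then $c_1(n)=M_1(n)+E_1(n)$ is nonzero and $\sgn(c_1(n))=\sgn(M_1(n))$ by the triangle inequality, which is the claimed sign.
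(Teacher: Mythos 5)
Your proposal is correct and follows essentially the same route as the paper: both isolate the $k=10$ term of \Cref{lem:exact1} as $M_1(n)$ and reduce the identity to showing $A_{10}(n)=2\cos\bigl(2\pi\bigl(\tfrac{4}{25}+\tfrac{3n}{10}\bigr)\bigr)$ by evaluating the two summands $h\in\{3,7\}$ of \Cref{lem:Akjrewrite}, then deduce the sign pattern from the cosine. Your explicit check that the cosine never vanishes is a small welcome addition that the paper leaves implicit.
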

\begin{proof}
By \Cref{lem:exact1}, the identity is equivalent to proving that $M_1(n)$ is the term $k=10$. Namely, we need to show that
\begin{equation}\label{eqn:M1(n)}
M_1(n)=\frac{\sqrt{3}\pi}{5\sqrt{5n+8}} A_{10}(n)I_{1}\left(\frac{2\pi}{25}\sqrt{3(5n+8)}\right).
\end{equation}
Plugging in \Cref{lem:Akjrewrite} and recalling the definition \eqref{eqn:alphajdef} to simplify $A_{10}(n)$, the right-hand side of \eqref{eqn:M1(n)} becomes
\[
	\frac{\sqrt3\pi}{5\sqrt{5n+8}} I_{1}\left(\frac{2\pi}{25}\sqrt{3(5n+8)}\right) \sum_{h \in \{3,7\}} (-1)^{h_{1}}\zeta_{100}^{3\alpha_h(10) - h - 10}e^{-\frac{2\pi i(n+1)h}{10}}.
\]
Noting that the sum on $h$ equals $2\cos(2\pi(\frac4{25}+\frac{3n}{10}))$ yields \eqref{eqn:M1(n)}, and hence the claimed identity.

Now assume that $|E_1(n)|<|M_1(n)|$. In this case, we conclude that
\[
	\sgn\left(c_1(n)\right)=\sgn\left(M_1(n)\right)=\sgn\left(\cos\left(2\pi\left(\frac{4}{25}+\frac{3n}{10}\right)\right)\right).
\]
This gives the sign pattern $+-++--+--+$, matching what is claimed.
\end{proof}

\subsection{$\d=-1$}
We define
\begin{align*}
	M_{-1}(n)&:=\frac{2\sqrt3\pi}{5\sqrt{5n-8}}  \cos\left(2\pi\left(\frac{3}{25}-\frac{n}{10}\right)\right)I_1\left(\frac{2\pi}{25}\sqrt{3(5n-8)}\right),\\
	E_{-1}(n)&:=\frac{\sqrt2\pi}{\sqrt{5n-8}} \sum_{\substack{k\ge1\\ d=\gcd(k,10)\in\{5,10\}\\ k\neq 10}} \frac{\sqrt{d-4}}{k} \CA_k(n) I_1\left(\frac{2\pi}{5k}\sqrt{2(d-4)(5n-8)}\right).
\end{align*}
As before, we obtain the following.
\begin{lem}\label{lem:main+error-1}
For $n\in\N$, we have
	\[
		c_{-1}(n)=M_{-1}(n)+E_{-1}(n).
	\]
	Moreover, if $|E_{-1}(n)|<|M_{-1}(n)|$, then $\sgn(c_{-1}(n))$ agrees with what is claimed in \Cref{C:MainConj} for this $n$.
\end{lem}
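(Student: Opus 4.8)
The statement to prove, \Cref{lem:main+error-1}, is entirely parallel to \Cref{lem:main+error1}, so the plan is to mirror that proof. By \Cref{lem:exact-1}, the coefficient $c_{-1}(n)$ is an infinite sum over $k$ with $d=\gcd(k,10)\in\{5,10\}$; the definition of $E_{-1}(n)$ is precisely this sum with the $k=10$ term deleted. Hence the identity $c_{-1}(n)=M_{-1}(n)+E_{-1}(n)$ reduces to the claim that $M_{-1}(n)$ equals the $k=10$ contribution, namely
\begin{equation*}
	M_{-1}(n)=\frac{\sqrt3\pi}{5\sqrt{5n-8}}\,\CA_{10}(n)\,I_1\!\left(\frac{2\pi}{25}\sqrt{3(5n-8)}\right).
\end{equation*}
First I would unwind $\CA_{10}(n)=\sum_{\pm}\overline{A_{10,\pm1}(-n)}$ using \Cref{lem:Akjrewrite} (with $k=10$, $d=10$, so $\frac{10}{d}=1$ and the divisibility condition $\frac{10}{d}\mid h'$ is automatic), observing that for $k=10$ the inner Kloosterman-type sum collapses: the only $h\pmod{10}$ coprime to $10$ with $h\equiv\pm1\pmod{10}$ are $h\in\{1,9\}$, and for each such $h$ one has a unique $h'\pmod{10}$ with $hh'\equiv-1$, so $\CA_{10}(n)$ is a sum of just two explicit roots of unity.

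Next I would carry out the bookkeeping for these two terms. For $h\in\{1,9\}$ with $k=10$, compute $h_1,h_2$ from \eqref{eqn:3hsplit}, then $\nu_1,\nu_2,\mu_1,\mu_2$ from \eqref{E:hn}, and $\alpha_h(10)$ from \eqref{eqn:alphajdef}; plug these into the rewritten form of $A_{10,\pm1}(-n)$ from \Cref{lem:Akjrewrite}, take the complex conjugate, and sum over the two values of $h$. The two contributions are complex conjugates of one another (they correspond to $h$ and $-h\pmod{10}$, with the roles of $\nu,\mu$ data flipping sign in the relevant places), so their sum is $2\cos(\cdots)$ for an explicit argument; matching constants, this argument must come out to $2\pi(\frac{3}{25}-\frac{n}{10})$, reproducing the cosine in $M_{-1}(n)$, and the Bessel factor $I_1(\frac{2\pi}{25}\sqrt{3(5n-8)})$ is exactly the $k=10$, $d=10$ case of the Bessel argument $\frac{2\pi}{5k}\sqrt{2(d-4)(5n-8)}$ in \Cref{lem:exact-1}. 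This establishes the displayed identity and hence $c_{-1}(n)=M_{-1}(n)+E_{-1}(n)$.

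For the second assertion, if $|E_{-1}(n)|<|M_{-1}(n)|$ then $\sgn(c_{-1}(n))=\sgn(M_{-1}(n))=\sgn(\cos(2\pi(\frac{3}{25}-\frac{n}{10})))$, and I would simply tabulate this sign over the ten residue classes $n\pmod{10}$ to check it yields $++++-----+$, matching \Cref{C:MainConj} for $\d=-1$; note $\cos(2\pi(\frac3{25}-\frac n{10}))$ has the same sign as $\cos(2\pi(\frac3{25}+\frac n{10}))$ and depends only on $n\bmod 10$. The only real obstacle is the sign-of-roots-of-unity bookkeeping in computing $\CA_{10}(n)$ explicitly — tracking the parity factor $(-1)^{h_1+\nu_1+\mu_1}$ and the phase $\zeta_{10k}^{\nu_2+d-3\mu_2}$ together with the exponential $e^{2\pi i(\frac{d^2}{20}(\mu_1^2-\nu_1^2+\mu_1-\nu_1)h'-(n-1)h)/k}$ correctly so that the two terms visibly combine into the stated cosine. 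This is a finite, purely mechanical computation with $k=10$ fixed, exactly as in the proof of \Cref{lem:main+error1}, so no genuine difficulty is expected; everything else is formal.
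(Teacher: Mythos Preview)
Your proposal is correct and follows exactly the approach the paper intends; indeed, the paper gives no explicit proof of \Cref{lem:main+error-1} at all, writing only ``As before, we obtain the following,'' so your sketch simply spells out the analogue of the proof of \Cref{lem:main+error1} with $h\in\{1,9\}$ in place of $h\in\{3,7\}$ and $5n-8$ in place of $5n+8$. One small slip: your parenthetical remark that $\cos(2\pi(\tfrac{3}{25}-\tfrac{n}{10}))$ has the same sign as $\cos(2\pi(\tfrac{3}{25}+\tfrac{n}{10}))$ is false (e.g.\ at $n=1$), but since you in any case propose to tabulate the former directly over $n\bmod 10$, this does not affect the argument.
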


\section{Kloosterman sums bounds}\label{sec:Kloosterman}
\subsection{$d=5$}
\begin{lem}\label{L:d=5gen}
If $k\in\N$ with $d=\gcd(k,10)=5$ and $j\in\Z$ with $\gcd(j,5)=1$, then for $n\in\Z$ we have
\rm
	\begin{equation*}
			\left|A_{k,j}(n)\right| \le 2d(k)\sqrt{\frac k5}.
		\end{equation*}
\end{lem}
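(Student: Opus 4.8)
The plan is to reduce $A_{k,j}(n)$ to a classical Kloosterman sum of the form \eqref{eqn:Kloosterman} up to a bounded multiplicative factor and an auxiliary character-type summation over residue classes, and then invoke \Cref{lem:KloostermanBound}. First I would use \Cref{lem:Akjrewrite} to write
\[
	A_{k,j}(n) = \zeta_{10k}^{3\alpha_j-j-d}\sum_{\substack{h\pmod{k}^*\\ h\equiv j\pmod{d}\\ hh'\equiv-1\Pmod k\\ \frac{10}d\mid h'}} (-1)^{h_1+\nu_1+\mu_1}e^{\frac{2\pi i}{k}\left( \frac{d^2}{20}\left(\nu_1^2-\mu_1^2+\nu_1-\mu_1\right)h'-(n+1)h\right)},
\]
so the leading root of unity has absolute value $1$ and can be discarded. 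Since $d=5$, we have $2\mid h'$, and I would parametrise the condition $5\mid h'$ together with $h\equiv j\pmod 5$ by writing $k = 5\kappa$ with $\gcd(\kappa,10)=1$; then $h$ is determined mod $5$ and runs freely mod $\kappa$, while $h'$ is the inverse of $-h$ mod $k$ and the divisibility $5\mid h'$ is automatic once one tracks $h'$ mod $5$. The key point is that the sign factor $(-1)^{h_1+\nu_1+\mu_1}$ and the phase $\frac{d^2}{20}(\nu_1^2-\mu_1^2+\nu_1-\mu_1)h'$, when the dependence on $h,h'$ is unwound via \eqref{eqn:3hsplit} and \eqref{E:hn}, depend on $h$ and $h'$ only through fixed residue classes modulo a bounded modulus (here modulo $2$ and modulo $5$, hence modulo $10$), because Lemma \ref{lem:Akjrewrite} already shows the summand is well-defined mod $k$ in both variables.

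Concretely, I would show $(-1)^{h_1+\nu_1+\mu_1}$ and $e^{\frac{2\pi i}{k}\cdot\frac{d^2}{20}(\nu_1^2-\mu_1^2+\nu_1-\mu_1)h'}$ combine into a function of $(h\bmod{10}, h'\bmod{10})$ times a phase of the form $e^{\frac{2\pi i}{k}(a h + b h')}$ for integers $a,b$ absorbing the remaining linear-in-$h,h'$ part (the $-(n+1)h$ term and any linear residue of the quadratic expression). Splitting the sum according to the value of $h\bmod{10}$ (equivalently writing $h = 10t + r$ over the $O(1)$ admissible residues $r$), each piece becomes, after clearing the fixed root of unity, a sum of the shape $\sum_{h\pmod k^*} c(h\bmod{10},h'\bmod{10})\, e^{\frac{2\pi i}{k}(\tilde a h + \tilde b h')}$ with $|c|\le 1$; grouping by the residues of $h,h'$ mod $10$ and using that on each such class the sum is a genuine Kloosterman sum $K_k(\tilde a,\tilde b)$ (or a piece thereof, which one bounds by completing back to the full sum and controlling the number of classes), one gets $|A_{k,j}(n)| \le C \cdot |K_k(\cdot,\cdot)|$ with an explicit small constant. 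Then \Cref{lem:KloostermanBound} gives $|K_k| \le \sqrt{\gcd(\tilde a,\tilde b,k)}\, d(k)\sqrt{k} \le d(k)\sqrt k$ (bounding $\gcd \le$ a bounded quantity, or more carefully noting $\gcd(\tilde a,\tilde b,k)$ divides a fixed small number when $d=5$), and after dividing out the factor of $5$ from $k=5\kappa$ and tracking constants one arrives at $|A_{k,j}(n)| \le 2d(k)\sqrt{k/5}$.

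The main obstacle I anticipate is the bookkeeping needed to verify that $(-1)^{h_1+\nu_1+\mu_1}$ and the quadratic phase genuinely collapse to a function of $h,h'$ modulo a small fixed modulus with only linear-in-$h,h'$ residual phases — i.e. that the "quadratic" parts $\nu_1^2,\mu_1^2,h_1$ contribute, modulo integers, only terms linear in $h$ and $h'$ (or periodic with small period). This is plausible because $\nu_1 = (h-\nu_2)/d$ and $h_1 = (3h-h_2)/k$ are affine in $h$ with denominators dividing $k$, so $\nu_1^2 h'/k$ expands into a term $\propto h^2 h'/(d^2 k)$ that must be shown integral (it is: $d^2\mid k^2$ forces the relevant combination into $\IZ$ after using $2\mid h'$) plus cross terms linear in $h$; getting the constant exactly $2$ rather than some larger absolute constant requires care in counting how many residue classes mod $10$ actually occur and checking they do not multiply the Weil bound by more than $2$. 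A secondary technical point is handling the $\gcd(\tilde a,\tilde b,k)$ appearing in \Cref{lem:KloostermanBound}: one should check that, for $d=5$, this gcd is bounded (indeed divides a fixed power of $2$), so it does not spoil the $\sqrt{k/5}$ shape; if necessary one absorbs a bounded factor into the constant, which is why the stated bound carries the harmless factor $2$.
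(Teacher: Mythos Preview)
Your high-level architecture matches the paper's: rewrite via \Cref{lem:Akjrewrite}, collapse the sign and the quadratic phase, detect the congruence on $h$ by characters, and apply the Weil bound. But two of your intermediate claims are not right as stated, and these are exactly the points where the paper does real work. First, the sign $(-1)^{h_1+\nu_1+\mu_1}$ is not merely periodic in $h$ with small period: the paper shows it equals $(-1)^{j+\alpha_j}$, i.e.\ it is \emph{constant} on the sum (using $h_1\equiv h+h_2$, $\nu_1\equiv h+j$, $\mu_1\equiv h_2+\alpha_j\pmod 2$). Second, your assertion that the $h^2h'$-type contribution ``must be shown integral (it is: $d^2\mid k^2$ forces the relevant combination into $\IZ$ after using $2\mid h'$)'' is false; e.g.\ for $k=5$, $h=1$, $h'=4$ one has $h^2h'/(20k)=1/25\notin\IZ$. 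The correct mechanism is not integrality but \emph{linearization}: one uses $hh'\equiv -1$ to replace $h^2h'$ by $-h$ modulo the relevant modulus. To make the denominators work, the paper refines the representative of $h'$ to satisfy $4\mid h'$ \emph{and} $hh'\equiv -1\pmod{5k}$ (not merely $\pmod k$); with this choice the expression $A=5(\nu_1^2-\mu_1^2+\nu_1-\mu_1)h'$ reduces modulo $k$ to $\tfrac{4}{5}\big(2h+\tfrac12(j^2-5j-\alpha_j^2+5\alpha_j)\tfrac{h'}{2}\big)-\tfrac25(3\alpha_j-j-5)$, which is genuinely linear in $h,h'$.

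Consequently the sum is not a piece of $K_k$ as you write, but (after orthogonality on $\IZ/5\IZ$ to remove $h\equiv j\pmod 5$ and the substitution $h'\mapsto 4h'$) a short average of full Kloosterman sums $K_{5k}$:
\[
A_{k,j}(n)=-\frac{1}{25}\sum_{\ell\pmod 5} e^{\frac{2\pi i j\ell}{5}}\,K_{5k}\!\left((5n+3)\tfrac{k^2-1}{4}+\ell k,\ j^2-5j-\alpha_j^2+5\alpha_j\right).
\]
Your plan of ``splitting into residue classes mod $10$ and treating each as a genuine $K_k$'' does not produce genuine Kloosterman sums and, if completed via characters, leads back to exactly this formula. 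The clean constant $2$ then comes from two explicit facts you only guessed at: $j^2-5j-\alpha_j^2+5\alpha_j=\pm 2$ for every $j$, so since $5k$ is odd the gcd in \Cref{lem:KloostermanBound} is $1$; and $d(5k)\le 2d(k)$. Putting these into the Weil bound and the $\frac{1}{25}\cdot 5$ averaging gives $|A_{k,j}(n)|\le \tfrac15\cdot 2d(k)\sqrt{5k}=2d(k)\sqrt{k/5}$.
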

\begin{proof}
Since $A_{k,j}$ only depends on $j$ (mod $5$), we assume without loss of generality that $1\leq j\leq 4$.
By Lemma \ref{lem:Akjrewrite}, we have
\[
	A_{k,j}(n) = \zeta_{10k}^{3\alpha_j-j-5}\sum_{\substack{h\pmod{k}^*\\ h\equiv j\pmod{5}\\hh'\equiv-1\Pmod k\\ 2\mid h'}} (-1)^{h_1+\nu_1+\mu_1}e^{\frac{2\pi i }{k}\left( \frac{5}{4}\left(\nu_1^2-\mu_1^2+\nu_1-\mu_1\right)h'-(n+1)h\right)}.
\]
	Note that by \eqref{eqn:mu2alphaj} $\mu_2=\alpha_j$ only depends on $j$. Rearranging \eqref{E:hn}, we have, using \eqref{eqn:3hsplit},
	\begin{equation*}
		\nu_1 = \frac{h-j}5,
		\quad  \mu_{1} = \frac{3h-h_{1}k-\alpha_j}{5}.
	\end{equation*}
	Note that, using \eqref{eqn:3hsplit} and \eqref{E:hn},
	\begin{equation*}
		h_1 \equiv h + h_2 \Pmod2, \quad \mu_1\equiv h_2 + \alpha_j \Pmod2, \quad \nu_1 \equiv h+j \Pmod2.
	\end{equation*}
	Thus
	\begin{equation*}
		h_1 + \nu_1 + \mu_1 \equiv j+\alpha_j\pmod{2}.
	\end{equation*}
	This yields that
\begin{equation}\label{eqn:Akjnsimp}
	A_{k,j}(n) = (-1)^{j+\alpha_j}\zeta_{10k}^{3\alpha_j-j-5}\sum_{\substack{h\pmod{k}^*\\ h\equiv j\pmod{5}\\hh'\equiv-1\Pmod k\\ 2\mid h'}} e^{\frac{2\pi i }{k}\left( \frac{5}{4}\left(\nu_1^2-\mu_1^2+\nu_1-\mu_1\right)h'-(n+1)h\right)}.
\end{equation}
	We next simplify $e^\frac{2\pi iA}{4k}$, where
	\begin{align*}
		A &:= 5\left(\nu_1^2-\mu_1^2+\nu_1-\mu_1\right)h' = 5\left(\left(\tfrac{h-j}5\right)^2-\left(\tfrac{3h-h_1k-\alpha_j}5\right)^2+\tfrac{h-j}5-\tfrac{3h-h_1k-\alpha_j}5\right)h'\\
		&= \tfrac15\left(-8h^2+(6\alpha_j-2j-10)h-h_1^2k^2 +6hh_1k+(5-2\alpha_j)h_1k+j^2-5j-\alpha_j^2+5\alpha_j\right)h'.
	\end{align*}

	Now we choose $h'$ with $4\mid h'$. Then $A\equiv0 \pmod{4}$. Moreover we choose $h'$ with $hh'\equiv-1\Pmod{5k}$. Then we obtain
	\begin{align*}
		A&\equiv \frac{1}{5}\left(8h-(6\alpha_j-2j-10)-6h_1k+(5-2\alpha_j)h_1kh'+\left(j^2-5j-\alpha_j^2+5\alpha_j\right)h'\right)\\
		&\equiv\frac15\left(8h+\left(j^2-5j-\alpha_j^2+5\alpha_j\right)h'-( 6\alpha_j-2j-10)\right)\\
		& = \frac45\left(2h+\frac{1}{2}\left(j^2-5j-\alpha_j^2+5\alpha_j\right)\frac{h'}{2}\right)-\frac{2}{5}( 3\alpha_j-j-5) \Pmod k.
	\end{align*}
Plugging back into \eqref{eqn:Akjnsimp}, it is not hard to see that
	\[
		A_{k,j}(n)=-\sum_{\substack{h\pmod k^*\\h\equiv j\pmod5\\hh'\equiv-1\Pmod{5k}\\4\mid h'}} e^{\frac{2\pi i}{k}\left(\frac{1}{5} \left(2h+\left(j^2-5j-\alpha_j^2+5\alpha_j\right)\frac{h'}{4}\right)-(n+1)h\right)}.
	\]
	We next make the change of variables $h'\mapsto 4h'$, and note that we may take $[4]_{5k}=\frac{1-k^2}4$. Thus we obtain
	\begin{equation*}
			A_{k,j}(n) = -\frac{1}{25} \sum_{\ell\Pmod5} e^{\frac{2\pi i j \ell}5} \sum_{\substack{h\Pmod{5k}^*\\ hh'\equiv -1\pmod{5k}}} e^{\frac{2\pi i}{5k}\left(\left((5n+3)\frac{k^2-1}{4}+\ell k\right)h+\left(j^2-5j-\alpha_j^2+5\alpha_j\right)h'\right)},
		\end{equation*}
	since
	\begin{equation*}
		\sum_{\ell\Pmod5} e^{\frac{2\pi i}5(h+j)\ell} =
		\begin{cases}
				5 & \text{if $h\equiv -j \Pmod5$},\\
				0 & \text{otherwise}.
		\end{cases}
	\end{equation*}
	Plugging in definition \eqref{eqn:Kloosterman}, we obtain
	\begin{equation*}
		A_{k,j}(n) =	-\frac{1}{25} \sum_{\ell\Pmod5} e^{\frac{2\pi i j\ell}5} K_{5k}\left((5n+3)\frac{k^2-1}4+\ell k,j^2-5j-\alpha_j^2+5\alpha_j\right).
		\end{equation*}
	We now use \Cref{lem:KloostermanBound}
and
\begin{equation}\label{eqn:d5k}
d(5k)\le d(5)d(k)=2d(k)
\end{equation}
to bound
	\begin{align*}
			&\hspace{-0.5cm}\left|K_{5k}\left((5n+3)\frac{k^2-1}4+\ell k,j^2-5j-\alpha_j^2+5\alpha_j\right)\right|\\
	&\le \sqrt{\gcd\left((5n+3)\frac{k^2-1}4+\ell k,j^2-5j-\alpha_j^2+5\alpha_j,5k\right)}  d(5k)\sqrt{5k}\\
	&\le\sqrt2\sqrt{\left|j^2-5j-\alpha_j^2+5\alpha_j\right|}  d(k)\sqrt{5k},
	\end{align*}
	where we note that $2$ cannot divide the gcd because $5k$ is odd. Thus
	\begin{equation*}
		|A_{k,j}(n)| \le \frac{\sqrt{2k}}{\sqrt5}\sqrt{\left|j^2-5j-\alpha_j^2+5\alpha_j\right|} d(k).
	\end{equation*}
	We finally compute
	\[
		j^2-5j-\alpha_j^2+5\alpha_j=
		\begin{cases}
			2 & \text{if }j\in\{1,4\},\\
			-2 & \text{if }j\in\{2,3\}.
		\end{cases}\qedhere
	\]
\end{proof}
\subsection{$d=10$}
For $d=10$, we have the following.
\begin{lem}\label{lem:d=10gen}
If $k\in\N$ with $\gcd(k,10)=10$ and $j\in\Z$ with $\gcd(j,10)=1$, then for $n\in\Z$ we have
	\begin{equation*}
		|A_{k,j}(n)| \le d(10k) \sqrt{\frac{3k}5}.
	\end{equation*}
\end{lem}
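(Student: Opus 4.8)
The plan is to run, for $d=10$, the same argument that establishes \Cref{L:d=5gen}. Since $A_{k,j}(n)$ depends only on $j$ modulo $10$, we may assume $j\in\{1,3,7,9\}$. Applying \Cref{lem:Akjrewrite} (here $\frac{10}{d}=1$, so the condition $\frac{10}{d}\mid h'$ is vacuous, and $\frac{d^2}{20}=5$) pulls the root of unity $\zeta_{10k}^{3\alpha_j-j-10}$ out of the sum, leaving
\[
A_{k,j}(n)=\zeta_{10k}^{3\alpha_j-j-10}\sum_{\substack{h\pmod{k}^*\\ h\equiv j\pmod{10}\\ hh'\equiv-1\Pmod{k}}}(-1)^{h_1+\nu_1+\mu_1}e^{\frac{2\pi i}{k}\left(5\left(\nu_1^2-\mu_1^2+\nu_1-\mu_1\right)h'-(n+1)h\right)},
\]
where by \eqref{eqn:3hsplit} and \eqref{E:hn} we have $\nu_1=\frac{h-j}{10}$, $\mu_1=\frac{3h-h_1k-\alpha_j}{10}$, and $\mu_2=\alpha_j$ depends only on $j$.

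As in the proof of \Cref{L:d=5gen}, the next task is to simplify the sign $(-1)^{h_1+\nu_1+\mu_1}$ and the phase $e^{\frac{2\pi i}{k}\cdot5(\nu_1^2-\mu_1^2+\nu_1-\mu_1)h'}$. For the sign, one tracks the parities of $h_1,\nu_1,\mu_1$ modulo $2$; since $10\mid k$ forces $h$ and $j$ odd, this leaves an expression depending on $j,\alpha_j$ and on $h$ modulo $20$, which is either constant (and is then pulled outside the sum) or is folded into the additive characters introduced below. For the phase, one expands $5(\nu_1^2-\mu_1^2+\nu_1-\mu_1)$ as a quadratic polynomial in $h$ whose coefficients depend on $h_1,j,\alpha_j$ and which has a bounded denominator, chooses the representative of $h'$ that clears this denominator (and lifts $hh'\equiv-1$ to a suitable multiple of $k$), uses $hh'\equiv-1$ to replace $h^2h'$ by an expression linear in $h$, and performs the attendant change of variables in $h'$ — the analogue of the step "$h'\mapsto4h'$" in \Cref{L:d=5gen}.

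After these reductions, $A_{k,j}(n)$ takes the shape $c\sum_{h}e^{\frac{2\pi i}{k}(ah+bh')}$ subject to a congruence on $h$ modulo $10$. Detecting this congruence via the identity $\sum_{\ell\Pmod{10}}e^{\frac{2\pi i}{10}\ell m}$, which equals $10$ if $10\mid m$ and $0$ otherwise, and collapsing the resulting exponential sum in $h$, we obtain
\[
A_{k,j}(n)=\frac{\text{(root of unity)}}{100}\sum_{\ell\Pmod{10}}e^{\frac{2\pi i j\ell}{10}}K_{10k}\!\left(a_\ell,b_j\right),
\]
where $a_\ell$ is linear in $\ell$ and $b_j$ is the $d=10$ analogue of the quantity $j^2-5j-\alpha_j^2+5\alpha_j$ from \Cref{L:d=5gen}; a short computation over $j\in\{1,3,7,9\}$ gives $b_j=\pm12$. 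Now \Cref{lem:KloostermanBound} yields
\[
|A_{k,j}(n)|\le\frac{1}{10}\max_{\ell\Pmod{10}}\sqrt{\gcd\left(a_\ell,b_j,10k\right)}\;d(10k)\sqrt{10k},
\]
so it remains to check that $\gcd(a_\ell,b_j,10k)\le6$: the prime $5$ divides neither $b_j=\pm12$ nor $a_\ell$, and $4\nmid a_\ell$, so the $2$-part of the gcd is at most $2$ and its $3$-part at most $3$. This gives $|A_{k,j}(n)|\le\frac{\sqrt6}{10}d(10k)\sqrt{10k}=d(10k)\sqrt{\frac{3k}{5}}$, as claimed.

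I expect the main obstacle to be the arithmetic bookkeeping of the middle step. Because $k$ is even and divisible by $10$, the representative $h'$ is necessarily coprime to $10$, so the clean "$4\mid h'$" device of \Cref{L:d=5gen} does not transfer verbatim; one must instead exploit that $5(\nu_1^2-\mu_1^2+\nu_1-\mu_1)$ is genuinely — not merely formally — divisible by the relevant power of $2$ on the support of the sum, lift $h'$ accordingly, and keep careful track of how the mod-$20$ sign $(-1)^{\nu_1}$ interacts with the mod-$10$ detection characters. Once this is settled, the Weil bound, the verification $4\nmid a_\ell$, and the identification $b_j=\pm12$ are all routine computations.
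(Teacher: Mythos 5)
Your overall strategy is the paper's: reduce to $j\in\{1,3,7,9\}$, apply \Cref{lem:Akjrewrite}, simplify the sign and the quadratic phase, detect the congruence on $h$ with additive characters, recognize a Kloosterman sum $K_{10k}$, and finish with \Cref{lem:KloostermanBound}. However, two of your steps have genuine gaps. First, the central arithmetic reduction is only described, not carried out, and you yourself flag it as the main obstacle. The paper resolves it concretely: it chooses $h'$ with $hh'\equiv -1\Pmod{10k}$, reduces $B=5(\nu_1^2-\mu_1^2+\nu_1-\mu_1)h'$ modulo $k$ using $\alpha_j\equiv 3j\Pmod{10}$ and $h'\equiv-[j]_{10}\Pmod{10}$, and then splits into the cases $2\mid\mid k$ and $4\mid k$; in both cases the sign $(-1)^{h_1+\nu_1+\mu_1}$ turns out to equal $(-1)^{(j+\alpha_j)/2}$ (after the extra $(-1)^{h_1}$ produced when $4\mid k$ is cancelled against a matching factor from the phase), so it is a constant that can be pulled out --- it is not ``folded into the additive characters.'' Your hedge between those two outcomes indicates the computation has not been done, and without it the identity you need, namely \eqref{eqn:Akjn2||k}, is not established.

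Second, your endgame does not close as stated. You take the second Kloosterman argument to be $b_j=j^2-10j-\alpha_j^2+10\alpha_j=\pm12$ and then need $\gcd(a_\ell,12,10k)\le 6$, which you justify by asserting $4\nmid a_\ell$. But with the natural normalization (the paper's first argument is $a_\ell=2(k\ell-5n-3)$, and only $\ell\Pmod 5$ is needed since $h$ odd is automatic when $10\mid k$), one has $k\ell$ even and $5n+3\equiv n+1\Pmod 2$, so $4\mid a_\ell$ whenever $n$ is odd; since also $4\mid 10k$, the gcd can then be $12$ and your bound degrades by a factor of $\sqrt2$. The paper sidesteps this entirely by arranging the second argument to be $\tfrac12\left(j^2-10j-\alpha_j^2+10\alpha_j\right)=\pm 6$ (the halving is legitimate because $\alpha_j\equiv j\Pmod 2$ makes $\tfrac1{20}(j^2-10j-\alpha_j^2+10\alpha_j)h'$ the quantity that actually appears), whence $\gcd(\cdot,\pm6,\cdot)\le 6$ holds trivially and no parity claim about $a_\ell$ is needed. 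You should either redo the reduction so that the second argument is $\pm6$, or find another argument ruling out $4\mid\gcd$; as written the claim $4\nmid a_\ell$ is false for half of all $n$.
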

\begin{proof}
	We again assume without loss of generality that $1\leq j<10$. By \Cref{lem:Akjrewrite}, we have
	\[
		A_{k,j}(n) = \zeta_{10k}^{3\alpha_j-j-10}\sum_{\substack{h\pmod{k}^*\\ h\equiv j\pmod{10}\\hh'\equiv-1\Pmod k}} (-1)^{h_1+\nu_1+\mu_1}e^{\frac{2\pi i}{k}\left(5\left(\nu_1^2-\mu_1^2+\nu_1-\mu_1\right)h'-(n+1)h\right)}.
	\]
	We next write the factor involving $h'$ as $e^\frac{2\pi iB}k$. By \eqref{eqn:nu2j} and \eqref{eqn:mu2alphaj}, we have that $\nu_2=j$ and $\mu_2=\alpha_j$. By \eqref{eqn:3hsplit} and \eqref{E:hn}, we have
	\begin{equation*}
		 \nu_1 = \frac{h-j}{10}, \quad \mu_1 = \frac{3h-h_1k-\alpha_j}{10}.
	\end{equation*}
	Thus we have
	\begin{align*}
		B&=5\left(\nu_1^2-\mu_1^2+\nu_1-\mu_1\right)h' = 5\left(\left(\tfrac{h-j}{10}\right)^2-\left(\tfrac{3h-h_1k-\alpha_j}{10}\right)^2 + \tfrac{h-j}{10} - \tfrac{3h-h_1k-\alpha_j}{10}\right) h'\\
		&= \tfrac15\left(-2h^2 + \left(\tfrac{3\alpha_j-j}{2}-5\right) h + 3hh_1\tfrac k2 +\left(5-\alpha_j\right)h_1\tfrac{k}{2} - h_1^2\tfrac{k^2}4 +\tfrac14\left(j^2-10j-\alpha_j^2+10\alpha_j\right)\right)h'.
	\end{align*}
	We note that since $j\in\{1,3,7,9\}$, we have $\alpha_j\in\{3j,3j-20\}$, so $\alpha_j\equiv j\pmod{2}$, and hence $\frac{j^2}{4}-\frac{\alpha_j^2}{4}\in\Z$ and $\frac{j+ c\alpha_j}{2}\in\Z$ for any odd $c$. We now choose $h'$ so that $hh'\equiv-1\Pmod{10k}$, giving
	\begin{equation*}
		B \equiv  \tfrac25h +\tfrac1{10}(j-3\alpha_j)+1+ \tfrac1{20}\left(j^2-10j-\alpha_j^2+10\alpha_j\right)h' - \tfrac k{10}h_1 \left(3 +(\alpha_j-5)h' +h'\tfrac k{2}\right)\pmod{k}.
	\end{equation*}
	Next we use that $\alpha_j\equiv 3j\pmod{10}$ and $h'\equiv -[j]_{10}\pmod{10}$ to show that
	\begin{equation}\label{eqn:Bcong}
		B \equiv \tfrac25h +\tfrac1{10}(j-3\alpha_j)+1+ \tfrac1{20}\left(j^2-10j-\alpha_j^2+10\alpha_j\right)h' - \tfrac k{2}h_1[j]_{10}\left(1-\tfrac{k}{10}\right)\pmod{k}.
	\end{equation}
	We now distinguish two cases depending on whether $2\mid\mid k$ or $4\mid k$. If $2\mid\mid k$, then we have
	\[
		\frac {k}{2}h_1[j]_{10} \left(1 - \frac k{10}\right)\equiv 0\pmod{k}.
	\]
	Thus we obtain in this case
	\[
		B \equiv \frac25h +\frac1{10}(j-3\alpha_j)+1+ \frac1{20}\left(j^2-10j-\alpha_j^2+10\alpha_j\right)h'\pmod{k}.
	\]
	We next simplify the sign factor in this case. Plugging
 $\nu_2=j$ and $\mu_2=\alpha_j$ into \eqref{eqn:3hsplit} and \eqref{E:hn}, we conclude that
	\begin{equation*}
		3(10\nu_1+j) = h_1k + h_2 = h_1k + 10\mu_1 + \alpha_j.
	\end{equation*}
	Taking this modulo $4$, we obtain
	\begin{equation*}
		h_1 + \nu_1 + \mu_1 \equiv \frac12(\alpha_j+j) \Pmod2.
	\end{equation*}
	Thus the sign factor becomes $(-1)^{\frac{\alpha_j+j}{2}}$, which only depends on $j$. We may also pull out the contribution from the term $\frac1{10}(j-3\alpha_j)+1$. In this case, we obtain overall
	\begin{equation}\label{eqn:Akjn2||k}
		|A_{k,j}(n)| = \left|\sum_{\substack{h\pmod k^*\\h\equiv j\pmod{10}\\hh'\equiv-1\Pmod {10k}}} e^{\frac{2\pi  i}{5k}\left(-(5n+3)h+ \frac14 \left(j^2-10j-\alpha_j^2+10\alpha_j\right)h'\right)}\right|.
	\end{equation}

	We next consider the case $4\mid k$. In this case
	\begin{equation*}
		\frac{k}2 h_1[j]_{10}\left(1-\frac k{10}\right) \equiv \frac{k}2h_1 [j]_{10} \pmod{k}.
	\end{equation*}
	Noting that $[j]_{10}\equiv j\equiv 1\pmod{2}$ because $\gcd(j,10)=1$, this term contributes
	\begin{equation*}
		e^\frac{2\pi i\frac{k}2h_1[j]_{10}}k = (-1)^{h_1}.
	\end{equation*}
Plugging this into \eqref{eqn:Bcong}, for $4\mid k$ we have
	\[
	e^{\frac{2\pi i B}{k}}= (-1)^{h_1} e^{\frac{2\pi i}{5k}\left(2h+\frac12(j-3\alpha_j+10) + \frac14\left(j^2-10j-\alpha_j^2+10\alpha_j\right) h'\right)}.
	\]
	We again plug  $\nu_2=j$ and $\mu_2=\alpha_j$ into \eqref{eqn:3hsplit} and \eqref{E:hn} to obtain
	\begin{equation*}
		15\nu_1 - 5\mu_1 = h_1\frac k2+\frac{\alpha_j-3j}{2}.
	\end{equation*}
	Since $4\mid k$, we obtain
	\[
		\nu_1+\mu_1 \equiv \frac{\alpha_j+j}{2}\Pmod2.
	\]
 	Thus in this case,
	we obtain again \eqref{eqn:Akjn2||k}.

	We then write the Kloosterman sum as
	\begin{equation*}
		|A_{k,j}(n)| = \frac{1}{50} \left|\sum_{\ell\pmod{5}} e^{-\frac{2\pi ij\ell}{5}}K_{10k}\left(2(k\ell-5n-3),\frac12\left(j^2-10j-\alpha_j^2+10\alpha_j\right)\right)\right|.
	\end{equation*}
	Thus, using \Cref{lem:KloostermanBound},
	\begin{align*}
		|A_{k,j}(n)| &\le \frac1{10} \sqrt{\max_{0\le\ell\le4}\gcd\left(2(k\ell-5n-3),\frac12\left(j^2-10j-\alpha_j^2+10\alpha_j\right),10k\right)} d(10k) \sqrt{10k}\\
& \le \frac1{10} \sqrt{\left|j^2-10j-\alpha_j^2+10\alpha_j\right|}\sqrt{5k} d(10k).
	\end{align*}
Finally, we compute
\[
	j^2-10j-\alpha_j^2+10\alpha_j=
	\begin{cases}
		12 & \text{if }j\in\{1,9\},\\
		-12 & \text{if }j\in\{3,7\}.
	\end{cases}\qedhere
\]
\end{proof}
\section{Bounding the error terms}\label{sec:error}
\subsection{$\d=1$}
We explicitly need to bound the Kloosterman sums defined in \eqref{eqn:Akdef}. For this, we distinguish two cases.
\subsubsection{$d=5$}
First assume that $d=5$. Using \eqref{eqn:Akdef}, we obtain the following from \Cref{L:d=5gen}.
\begin{lem}\label{L:d=5}
If $k\in\N$ with $\gcd(k,10)=5$, then for $n\in\Z$
we have
	\begin{equation*}
		|A_k(n)| \le 4 d(k)\sqrt{\frac k5}.
	\end{equation*}
\end{lem}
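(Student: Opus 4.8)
The plan is to deduce this at once from \Cref{L:d=5gen} by the triangle inequality. By the definition \eqref{eqn:Akdef}, one has $A_k(n) = A_{k,3}(n) + A_{k,-3}(n)$, and the two indices $3$ and $-3$ reduce modulo $5$ to $3$ and $2$, both coprime to $5$. Hence \Cref{L:d=5gen}, with $d=\gcd(k,10)=5$, applies to each of the two summands and yields the uniform bound $|A_{k,\pm 3}(n)| \le 2 d(k)\sqrt{k/5}$, valid for all $n\in\Z$. Adding the two contributions gives
\[
|A_k(n)| \le |A_{k,3}(n)| + |A_{k,-3}(n)| \le 2 d(k)\sqrt{\tfrac k5} + 2 d(k)\sqrt{\tfrac k5} = 4 d(k)\sqrt{\tfrac k5},
\]
which is the claim.

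There is no genuine obstacle here: the substantive work—expressing $A_{k,j}(n)$ as a short combination of classical Kloosterman sums and invoking \Cref{lem:KloostermanBound} together with the divisor bound \eqref{eqn:d5k}—was already carried out in the proof of \Cref{L:d=5gen}, and the present lemma is merely the bookkeeping step of summing the two residue classes $\pm 3$ that actually appear in the exact formula for $c_1(n)$ (see \Cref{lem:exact1}). The only points worth a sentence of care are that $\pm 3\pmod 5$ indeed meets the hypothesis $\gcd(j,5)=1$ of \Cref{L:d=5gen}, and that the bound there is independent of $j$ and $n$, so no case distinction on the sign is needed. This estimate will subsequently be combined with the Bessel bounds of \Cref{lem:BesselBounds} to control the $d=5$ part of the error term $E_1(n)$.
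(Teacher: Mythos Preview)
Your proof is correct and is essentially identical to the paper's approach: the paper also derives \Cref{L:d=5} directly from \Cref{L:d=5gen} via the definition \eqref{eqn:Akdef}, applying the triangle inequality to the two summands $A_{k,\pm 3}(n)$.
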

Using \Cref{lem:exact1} and \Cref{L:d=5}, the contribution from $d=5$ can be bounded against
\begin{equation}\label{eqn:5midk1}
	\frac{4\sqrt2\pi}{5\sqrt{5n+8}} \sum_{k\ge1} \frac{d(5k)}{\sqrt k} I_1\left(\frac{2\pi}{25k}\sqrt{2(5n+8)}\right).
\end{equation}
By \Cref{lem:BesselBounds} (1) and \eqref{eqn:d5k} together with the well-known identity
\begin{equation}\label{eqn:dnDirichlet}
\sum_{n\geq 1} \frac{d(n)}{n^s}=\zeta(s)^2
\end{equation}
for $\re(s)>1$, the contribution from the terms with $k>\frac{2\pi}{25}\sqrt{2(5n+8)}$ can be bounded against
\begin{equation}\label{eqn:5midkbig1}
	\frac{16\pi^2}{125} \sum_{k>\frac{2\pi}{25}\sqrt{2(5n+8)}} \frac{d(5k)}{k^\frac32} \le \frac{32\pi^2\zeta\left(\frac32\right)^2}{125}.
\end{equation}
The remaining contribution can be estimated against
\begin{equation}\label{E:SmallBound}
	\frac{4\sqrt2\pi}{5\sqrt{5n+8}} I_1\left(\frac{2\pi}{25}\sqrt{2(5n+8)}\right) \sum_{1\le k\le\frac{2\pi}{25}\sqrt{2(5n+8)}} \frac{d(5k)}{\sqrt k}.
\end{equation}
We use \eqref{eqn:d5k} and then note that since divisors of $k$ come in pairs $(d,\frac{k}{d})$ with $\min\{d,\frac{k}{d}\}\leq \sqrt{n}$, we may trivially bound
\begin{equation}\label{eqn:d(k)bnd}
	d(k)\leq 2\sqrt{k}.
\end{equation}
So we may estimate \eqref{E:SmallBound} against
\begin{equation}\label{eqn:5midksmall1}
	\frac{64\pi^2}{125}  I_1\left(\frac{2\pi}{25}\sqrt{2(5n+8)}\right).
\end{equation}
\subsubsection{$d=10$}
Using again \eqref{eqn:Akdef} and \Cref{lem:d=10gen}, we obtain the following.
\begin{lem}\label{L:d=10}
If $k\in\N$ with $\gcd(k,10)=10$, then for $n\in\Z$ we have
	\begin{equation*}
		|A_k(n)| \le 2d(10k)\sqrt{\frac{3k}5}.
	\end{equation*}
\end{lem}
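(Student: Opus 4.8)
The plan is to deduce \Cref{L:d=10} immediately from \Cref{lem:d=10gen} together with the definition \eqref{eqn:Akdef}, in exact parallel to the way \Cref{L:d=5} is obtained from \Cref{L:d=5gen}. Recall that \eqref{eqn:Akdef} reads $A_k(n) = \sum_{\pm} A_{k,\pm 3}(n) = A_{k,3}(n) + A_{k,-3}(n)$, and that in the case at hand $\gcd(k,10)=10$, so the hypotheses of \Cref{lem:d=10gen} are in force for $k$. Since $\gcd(3,10)=\gcd(-3,10)=1$, that lemma applies to each summand individually with $j=3$ and with $j=-3$ (equivalently $j=7$), yielding $|A_{k,3}(n)|\le d(10k)\sqrt{3k/5}$ and $|A_{k,-3}(n)|\le d(10k)\sqrt{3k/5}$.

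The proof then consists of a single application of the triangle inequality:
\[
|A_k(n)| \le |A_{k,3}(n)| + |A_{k,-3}(n)| \le 2\,d(10k)\sqrt{\tfrac{3k}{5}},
\]
which is precisely the asserted bound. One small point worth recording is that the bound furnished by \Cref{lem:d=10gen} is uniform in $j$ among the residues with $\gcd(j,10)=1$: the $j$-dependence in the proof of \Cref{lem:d=10gen} enters only through $|j^2-10j-\alpha_j^2+10\alpha_j|$, which equals $12$ for every $j\in\{1,3,7,9\}$ and in particular for $j\equiv\pm 3\pmod{10}$, so no case distinction on the sign is needed when combining the two terms.

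There is essentially no obstacle here; all of the substance has already been carried out in \Cref{lem:d=10gen} (the Kloosterman-sum rewriting, the $2\parallel k$ versus $4\mid k$ split, and the invocation of \Cref{lem:KloostermanBound}). Consequently I would present \Cref{L:d=10} as a one-line corollary, exactly mirroring the treatment of \Cref{L:d=5}, and then pass directly to feeding this bound into \Cref{lem:exact1} to estimate the $d=10$ (with $k\neq 10$) contribution to $E_1(n)$ in the subsequent subsection.
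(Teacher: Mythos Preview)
Your proposal is correct and matches the paper's approach exactly: the paper states \Cref{L:d=10} with the one-line justification ``Using again \eqref{eqn:Akdef} and \Cref{lem:d=10gen}, we obtain the following,'' which is precisely the triangle-inequality argument you wrote out. The additional remark about the uniformity in $j$ is accurate but unnecessary, since \Cref{lem:d=10gen} already states the bound for all $j$ with $\gcd(j,10)=1$.
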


Using \Cref{L:d=10}, the absolute value of the contribution to \Cref{lem:exact1} from $10|k$, $k\geq11$ can be bounded against
\begin{equation}\label{eqn:10midkerror1first}
	\frac{6\sqrt2\pi}{5\sqrt{5n+8}} \sum_{k\ge2} \frac{d(100k)}{\sqrt k} {I_1{\left(\frac{2\pi}{25k}\sqrt{3(5n+8)}\right)}}.
\end{equation}
Plugging in \eqref{eqn:d(k)bnd} to estimate
\begin{equation*}
	d(100k) \le d(100) d(k) = 9d(k)\leq 18\sqrt{k},
\end{equation*}
we may bound \eqref{eqn:10midkerror1first} from above by
\begin{multline}\label{eqn:10midkerror1}
 \frac{108\sqrt2\pi}{5\sqrt{5n+8}}\hspace{-.05cm} \sum_{k=2}^{\left\lfloor\frac{2\pi}{25}\sqrt{3(5n+8)}\right\rfloor}\hspace{-.05cm} {I_1{\left(\frac{2\pi}{25k}\sqrt{3(5n+8)}\right)}}\\
+\frac{54\sqrt2\pi}{5\sqrt{5n+8}}\sum_{k>\frac{2\pi}{25}\sqrt{3(5n+8)}} \frac{d(k)}{\sqrt{k}} I_1\left(\frac{2\pi}{25k}\sqrt{3(5n+8)}\right).
\end{multline}

Again using \Cref{lem:BesselBounds} (1) and \eqref{eqn:dnDirichlet}, the part from $k>\frac{2\pi}{25}\sqrt{3(5n+8)}$ contributes
\begin{equation}\label{eqn:10midkbig1}
	\frac{54\sqrt2\pi}{5\sqrt{5n+8}} \sum_{k>\frac{2\pi}{25}\sqrt{3(5n+8)}} \frac{2\pi d(k)}{25k^{\frac{3}{2}}}\sqrt{3(5n+8)} \leq  \frac{108\sqrt6\pi^2}{125} \zeta\left(\frac32\right)^2.
\end{equation}
Next, we bound the remaining contribution against
\begin{equation}\label{eqn:10midkmoderate1}
\frac{108\sqrt2\pi}{5\sqrt{5n+8}} \sum_{2\leq k\leq \frac{2\pi}{25}\sqrt{3(5n+8)}} {I_1{\left(\frac{2\pi}{25k}\sqrt{3(5n+8)}\right)}}\leq \frac{216\sqrt6\pi^2}{125} I_1\left(\frac{\pi}{25}\sqrt{3(5n+8)}\right).
\end{equation}
\subsubsection{Overall bound}\label{sec:Overallbound1}
Plugging \eqref{eqn:5midkbig1} and \eqref{eqn:5midksmall1} into \eqref{eqn:5midk1} and \eqref{eqn:10midkbig1} and \eqref{eqn:10midkmoderate1} into \eqref{eqn:10midkerror1} and then taking the sum, we obtain a bound for $|E_1(n)|$. Hence, by Lemma \ref{lem:main+error1}, we conclude that $\sgn(c_1(n))$ agrees with its claimed value in \Cref{C:MainConj} if
\begin{align}
	&\left(\frac{2\sqrt{3}\pi }{5\sqrt{5n+8}} \left|\cos\left(2\pi \left(\frac{4}{25}+\frac{3n}{10}\right)\right)\right| I_{1}\left(\frac{2\pi}{25}\sqrt{3(5n+8)}\right)\right)^{-1}
	\left(\frac{32\pi^{2} \zeta\left(\frac{3}{2}\right)^{2}}{125} \right.\nonumber\\
	&+ \left. \frac{64\pi^{2}}{125} I_{1}\left(\frac{2\pi}{25}\sqrt{2(5n+8)}\right) + \frac{108\sqrt{6}\pi^{2}\zeta\left(\frac{3}{2}\right)^{2}}{125} + \frac{216\sqrt{6}\pi^{2}}{125}I_{1}\left(\frac{\pi}{25}\sqrt{3(5n+8)}\right)\right)<1.\hspace{-0.2cm}\label{E:ToShow}
\end{align}
By Lemma \ref{lem:BesselBounds} (3), if $\frac{2\pi}{25}\sqrt{3(5n+8)}\ge 3$ (which is true for $n\ge 8$), then
\begin{equation*}
	I_1\left(\frac{2\pi}{25}\sqrt{3(5n+8)}\right) \ge \frac{5e^{\frac{2\pi}{25}\sqrt{3(5n+8)}}}{4\sqrt{2\pi}\cdot3^\frac14(5n+8)^\frac14}.
\end{equation*}
By Lemma \ref{lem:BesselBounds} (2), if $\frac{2\pi}{25}\sqrt{2(5n+8)}\ge1$ (which holds for all $n\in\N$), then
\begin{equation*}
	I_1\left(\frac{2\pi}{25}\sqrt{2(5n+8)}\right) \le \frac{5e^{\frac{2\pi}{25}\sqrt{2(5n+8)}}}{2^\frac14\pi(5n+8)^\frac14}.
\end{equation*}
Finally, if $\frac\pi{25}\sqrt{3(5n+8)}\ge1$ (which holds for $n\geq 3$), then Lemma \ref{lem:BesselBounds} (2) implies that
\begin{equation*}
	I_1\left(\frac\pi{25}\sqrt{3(5n+8)}\right) \le \frac{5\sqrt2e^{\frac\pi{25}\sqrt{3(5n+8)}}}{3^\frac14\pi(5n+8)^\frac14}.
\end{equation*}
So, for $n\geq 8$, we may bound the left-hand side of \eqref{E:ToShow} against
\begin{multline*}
	\frac{2\sqrt2(5n+8)^\frac34}{3^\frac14\sqrt\pi\left|\cos\left(2\pi\left(\frac4{25}+\frac{3n}{10}\right)\right)\right|}e^{-\frac{2\pi}{25}\sqrt{3(5n+8)}}\\
	\times \left(\left(8+27\sqrt6\right)\frac{4\pi^2\zeta\left(\frac{3}{2}\right)^2}{125}+\frac{32\cdot2^{\frac34}\pi e^{\frac{2\pi}{25}\sqrt{2(5n+8)}}}{25(5n+8)^\frac14}+\frac{432 \cdot3^\frac14\pi e^{\frac\pi{25}\sqrt{3(5n+8)}}}{25(5n+8)^\frac14}\right).
\end{multline*}
Now
\begin{equation*}
	\left|\cos\left(2\pi\left(\frac4{25}+\frac{3n}{10}\right)\right)\right| \ge \left|\cos\left(\frac{13\pi}{25}\right)\right|.
\end{equation*}
Thus we want
\begin{multline*}
	\frac{2\sqrt2(5n+8)^\frac34}{3^\frac14\sqrt\pi\left|\cos\left(\frac{13\pi}{25}\right)\right|}e^{-\frac{2\pi}{25}\sqrt{3(5n+8)}}\\
	\times \left(\left(8+27\sqrt6\right)\frac{4\pi^2\zeta\left(\frac{3}{2}\right)^2}{125}+\frac{32\cdot2^\frac34\pi e^{\frac{2\pi}{25}\sqrt{2(5n+8)}}}{25(5n+8)^\frac14}+\frac{432\cdot3^\frac14\pi e^{\frac\pi{25}\sqrt{3(5n+8)}}}{25(5n+8)^\frac14}\right)<1.
\end{multline*}
One can show that this holds for $n\ge 2929$. Using a computer, the inequality claimed in \Cref{C:MainConj} has been numerically confirmed for $n\leq 2928$ (other than the noted exceptions where the coefficient vanishes), and hence we may conclude the claim for all $n\in\N$.
\subsection{$\d=-1$}
\subsubsection{$d=5$}
Using \eqref{E:MA} and \Cref{L:d=5gen}, we directly obtain the following.
\begin{lem}\label{lem:calAd=5}
If $k\in\N$ with $\gcd(k,10)=5$, then for $n\in\Z$ we have
	\begin{equation*}
	|\CA_k(n)| \le 4 d(k)\sqrt{\frac k5}.
	\end{equation*}
\end{lem}
Plugging in Lemma \ref{lem:calAd=5}, the overall contribution to $E_{-1}(n)$ from $d=5$ can be bounded against
\begin{equation*}
	\frac{4\sqrt2\pi}{5\sqrt{5n-8}} \sum_{k\ge1} \frac{d(5k)}{\sqrt k}  I_1\left(\frac{2\pi}{25k}\sqrt{2(5n-8)}\right).
\end{equation*}
Now we can proceed exactly as for $\delta=1$ (just changing $5n+8$ into $5n-8$) to obtain that this may be estimated against
\begin{equation*}
	\frac{32\pi^2\zeta\left(\frac32\right)^2}{125} + \frac{64\pi^2}{125} I_1\left(\frac{2\pi}{25}\sqrt{2(5n-8)}\right).
\end{equation*}
\subsubsection{$d=10$}
Again using \eqref{E:MA} and \Cref{lem:d=10gen}, we obtain the following.
\begin{lem}
If $k\in\N$ with $\gcd(k,10)=10$, then for $n\in\Z$ we have
	\begin{equation*}
		|\CA_k(n)| \le 2 d(10k)\sqrt{\frac{3k}5}.
	\end{equation*}
\end{lem}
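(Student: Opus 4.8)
The plan is to deduce this immediately from \Cref{lem:d=10gen}, since essentially all of the work has already been done there. Starting from the definition \eqref{E:MA},
\[
	\CA_k(n)=\sum_{\pm}\overline{A_{k,\pm 1}(-n)},
\]
I would first apply the triangle inequality to get $|\CA_k(n)|\le |A_{k,1}(-n)|+|A_{k,-1}(-n)|$, using that complex conjugation preserves absolute values. Next I would observe that $\gcd(1,10)=\gcd(-1,10)=1$, so that \Cref{lem:d=10gen} applies to each of the two terms (with $j=1$ and $j=-1$, and with $-n$ in place of $n$, which is harmless since that lemma is stated for all integers). Each term is therefore bounded by $d(10k)\sqrt{3k/5}$, and summing the two contributions yields the claimed bound $|\CA_k(n)|\le 2d(10k)\sqrt{3k/5}$.

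There is no real obstacle here: the delicate point — simplifying the twisted exponential sum in $A_{k,j}(n)$ into an honest Kloosterman sum and then invoking the Weil bound of \Cref{lem:KloostermanBound} — was already handled in the proof of \Cref{lem:d=10gen}, and in particular the explicit evaluation $j^2-10j-\alpha_j^2+10\alpha_j=\pm 12$ for $j\in\{1,9\}$ and $j\in\{3,7\}$ is exactly the input needed for $j=\pm1$. (Note that $-1\equiv 9\pmod{10}$, so the relevant value is $+12$ for both $j=1$ and $j=-1$, consistent with the stated bound.) The statement is thus a direct corollary, exactly parallel to how \Cref{lem:calAd=5} was obtained from \Cref{L:d=5gen} in the $d=5$ case.
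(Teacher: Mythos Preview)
Your proposal is correct and takes essentially the same approach as the paper, which simply states that the lemma follows from \eqref{E:MA} and \Cref{lem:d=10gen}. Your triangle-inequality argument with $j=\pm1$ is exactly what is intended.
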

Overall we obtain that the contribution with $10\mid k$, $k\ge11$ can bounded against
\begin{equation*}
	\frac{6\sqrt{2}\pi}{5\sqrt{5n-8}} \sum_{k\ge2} \frac{d(100k)}{\sqrt k}  I_1\left(\frac{4\pi}{50k}\sqrt{3(5n-8)}\right).
\end{equation*}
Exactly as in the case $\delta=1$ this may be estimated against
\begin{equation*}
	\frac{108\sqrt6\pi^2}{125} \zeta\left(\frac32\right)^2 + \frac{216\sqrt6\pi^2}{125} I_1\left(\frac\pi{25}\sqrt{3(5n-8)}\right).
\end{equation*}
\subsubsection{Overall bound}\label{sec:Overallbound-1}
As above we obtain that we want for $n\geq12$
\begin{multline*}
	\frac{2\sqrt2(5n-8)^\frac34}{3^\frac14\sqrt\pi\left|\cos\left(\frac{14\pi}{25}\right)\right|}e^{-\frac{2\pi}{25}\sqrt{3(5n-8)}}\\
\times  \left(\left(8+27\sqrt6\right)\frac{4\pi^2}{125}\zeta\left(\frac{3}{2}\right)^2+\frac{32\cdot2^\frac34\pi e^{\frac{2\pi}{25}\sqrt{2(5n-8)}}}{25(5n-8)^\frac14}+\frac{432\cdot3^\frac14\pi e^{\frac\pi{25}\sqrt{3(5n-8)}}}{25(5n-8)^\frac14}\right)<1.
\end{multline*}
It is not hard to check that this holds for $n\geq 2234$. Using a computer, the inequality claimed in \Cref{C:MainConj} has been numerically verified for $n\leq2233$ (up to the noted exceptions where the coefficient vanishes), and hence we may conclude the claim for all $n\in\N$.


\end{document}